\newcommand{\comment}[1]{}
\DeclareMathOperator{\diag}{diag}
\DeclareMathOperator{\rank}{rank}
\newcommand{\defital}{\textit}
\newcommand{\C}{\mathbb C}
\newcommand{\Aut}{\text{Aut}}
\newcommand{\sL}{\mathscr{L}}
\newcommand{\cT}{\mathcal{T}}
\newtheorem{thm}{Theorem}[section]
\newtheorem{lem}[thm]{Lemma}
\newtheorem{prop}[thm]{Proposition}
\newtheorem{cor}[thm]{Corollary}
\theoremstyle{definition}
\newtheorem{remark}[thm]{Remark}
\newtheorem{defn}[thm]{Definition}
\newtheorem{example}[thm]{Example}
\newtheorem*{examplestar}{Example}
\theoremstyle{remark}
\providecommand*{\propertyautorefname}{Property}
\let\oldmarginpar\marginpar
\renewcommand\marginpar[1]{\oldmarginpar[\raggedleft\footnotesize #1]%
{\raggedright\footnotesize #1}}
\begin{document}
\begin{frontmatter}

\date{\today}

\title{Equitable Decompositions of Graphs with Symmetries}
\author[wb]{Wayne Barrett}
\address[wb]{Department of Mathematics, Brigham Young University, Provo, UT 84602, USA, wb@mathematics.byu.edu}

\author[amanda]{Amanda Francis}
\address[amanda]{Department of Mathematics, Computer Science and Engineering, Carroll College, Helena, MT 59601, USA, afrancis@carroll.edu}
\author[ben]{Benjamin Webb}
\address[ben]{Department of Mathematics, Brigham Young University, Provo, UT 84602, USA, bwebb@math.byu.edu}


\begin{abstract}

We investigate connections between the symmetries (automorphisms) of a graph and its spectral properties. {Whenever} a graph has a symmetry, i.e. a nontrivial automorphism $\phi$, it is possible to use $\phi$ to decompose any matrix $M\in\mathbb{C}^{n \times n}$ appropriately associated with the graph. The result of this decomposition is a number of strictly smaller matrices whose collective eigenvalues are the same as the eigenvalues of the original matrix $M$. Some of the matrices that can be decomposed are the graph's adjaceny matrix, Laplacian matrix, etc. Because this decomposition has connections to the theory of equitable partitions it is referred to as an \emph{equitable decomposition}. Since the graph structure of many real-world networks is quite large and has a high degree of symmetry, we discuss how equitable decompositions can be used to effectively bound both the network's spectral radius and spectral gap, which are associated with dynamic processes on the network. Moreover, we show that the techniques used to equitably decompose a graph can be used to bound the number of simple eigenvalues of undirected graphs, where we obtain sharp results of Petersdorf-Sachs type.
\end{abstract}

\begin{keyword}
Equitable Partition\sep Automorphism \sep Eigenvalue Multiplicity \sep Graph Symmetry\\
\end{keyword}

\end{frontmatter}

\section{Introduction}

In spectral graph theory one studies the relationship between two kinds of objects, a graph $G$ (which for us may be directed or undirected) and an associated matrix $M$.  The major aims of spectral graph theory are to determine information about a graph by examining the eigenvalues of $M$ and inferring information about the eigenvalues from the graph structure.

The particular type of structure we consider in this paper is the notion of graph symmetries, which can be understood via graph automorphisms.  Formally, a \emph{graph automorphism} of $G$ is defined to be a permutation $\phi: V(G) \to V(G)$ of the graph's vertices $V(G)$ that preserves adjacencies. More intuitively, a graph automorphism describes how parts of a graph can be permuted in a way that preserves the graph's overall structure.  In this sense these \emph{parts}, i.e., subgraphs, are symmetrical and together constitute a graph symmetry.
An appealing result of this type is the following about eigenvalue multiplicity (\cite{Cvet}, p. 82).

\begin{thm}[Moshowitz; Petersdorf and Sachs]
If $G$ is a simple graph with an automorphism of order greater than 2, then the adjacency matrix for $G$ has a multiple eigenvalue.
\end{thm}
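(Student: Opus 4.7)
The plan is to exploit the fact that a graph automorphism $\phi$ of $G$ corresponds to a permutation matrix $P$ on $V(G)$ which commutes with the adjacency matrix: $PA = AP$. Since $\phi$ has order $k > 2$, the same is true of $P$; in particular $P^2 \neq I$ while $P^k = I$. I will argue from here using nothing more than commuting-diagonalization of normal matrices.

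First I would analyze the spectrum of $P$. As a permutation matrix $P$ is orthogonal, hence normal, hence unitarily diagonalizable with eigenvalues on the unit circle; since $P^k = I$, these eigenvalues are $k$-th roots of unity. If every eigenvalue of $P$ lay in $\{+1, -1\}$, then $P^2 = I$, contradicting $k > 2$. Therefore $P$ has a non-real eigenvalue $\lambda$, and since $P$ is a real matrix its complex conjugate $\bar\lambda \neq \lambda$ is also an eigenvalue.

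Next I would use that $A$ is real symmetric (so normal) and commutes with the normal matrix $P$, so $A$ and $P$ are simultaneously diagonalizable over $\mathbb{C}$. Choose a common eigenvector $v \in \mathbb{C}^n$ with $Pv = \lambda v$ and $Av = \mu v$; note $\mu \in \mathbb{R}$ because $A$ is symmetric. Taking complex conjugates and using that $A$ and $P$ are real yields $P\bar v = \bar\lambda \bar v$ and $A\bar v = \mu \bar v$. Because $v$ and $\bar v$ are eigenvectors of $P$ for the \emph{distinct} eigenvalues $\lambda$ and $\bar\lambda$, they are linearly independent; yet both lie in the $\mu$-eigenspace of $A$. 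Hence $\mu$ is an eigenvalue of $A$ of multiplicity at least $2$.

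The one place where the hypothesis $k>2$ really pulls its weight is step two: for $k=2$ the eigenvalues of $P$ are just $\pm 1$ and the conjugate-pair trick collapses, which is consistent with the theorem's sharpness. The remaining work is routine linear algebra. Conceptually the same argument will go through for any real symmetric matrix commuting with $P$ (not only the adjacency matrix), which is exactly the thread the paper picks up through equitable decompositions.
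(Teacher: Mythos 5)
Your proof is correct. Note that the paper itself never proves this theorem --- it quotes it from \cite{Cvet} --- so the meaningful comparison is with the machinery the paper builds to obtain results of this type, namely Lemma \ref{lem:1}, Theorem \ref{thm:1} and Corollary \ref{cor:simpleevals}. There the multiplicity comes from an explicit similarity $S^{-1}MS=B_0\oplus\cdots\oplus B_{k-1}$ attached to a uniform (or basic) automorphism, together with the observation that symmetry of $M$ forces $B_{k-j}=B_j^{T}$, hence $\sigma(B_j)=\sigma(B_{k-j})$, so every eigenvalue of a block with $j\neq 0,k/2$ is automatically repeated. Your route is the classical one: the permutation matrix $P$ commutes with $A$; order greater than $2$ forces $P$ (diagonalizable, with eigenvalues that are roots of unity) to have a non-real eigenvalue $\lambda$; and the conjugate pair $v,\bar v$ of common eigenvectors, independent because $\lambda\neq\bar\lambda$, lands in a single real eigenspace of $A$. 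What your argument buys: it is shorter, it requires no hypothesis that the orbits be uniform or basic --- any automorphism of order greater than $2$ works, including ones mixing $2$-cycles and $4$-cycles, which the corollaries stated in this paper do not directly reach without the follow-up ``powers of $\phi$'' machinery --- and, as you observe, it applies verbatim to every real symmetric automorphism compatible matrix, which is exactly the generalization the paper advocates. What the paper's approach buys instead is constructive, quantitative information: it exhibits the repeated eigenvalues as eigenvalues of explicit small blocks $B_j$ and yields the sharp counts of simple eigenvalues in Corollaries \ref{cor:simpleevals} and \ref{cor:basicevals}, rather than the bare existence of one multiple eigenvalue.
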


 One limitation of this theorem is that it does not describe how many multiple eigenvalues there can be. Another is that it treats only the adjacency matrix, even though the same conclusion applies to many matrices associated with $G$.

Another connection between eigenvalues and automorphisms is seen in the theory of equitable partitions.  An equitable partition of the vertex set of a graph $G$ typically arises from an automorphism of $G$ and yields a matrix of reduced size all of whose eigenvalues are eigenvalues of the adjacency matrix of $G$.  This attractive concept has proved very useful in studying highly symmetrical graphs, yet it only yields \emph{some} of the eigenvalues, and again it seems that it should apply to a larger collection of matrices.

In this paper we show that for a large class of matrices associated with a graph $G$ and a large class of graph automorphisms, we can decompose the matrix via similarity into a direct sum of smaller matrices and thus the collective eigenvalues of these smaller matrices are the same as the eigenvalues of the original matrices (see Theorem \ref{thm:1} and \ref{thm:2}).  Since one of the summands in this direct sum is the same matrix that one finds via an equitable partition, we refer to the full decomposition as an \emph{equitable decomposition}.  As an interesting comparison, we recall that the spectral decomposition of a matrix requires knowledge of all eigenvalues and eigenvectors, while our decomposition can be found from an associated automorphism (symmetry) alone.


In the case of an undirected graph our main results allow us to give a sharp upper bound on the number of simple eigenvalues of many matrices associated with $G$.  This idea is not new as a very strong result of this type can be found in \cite{PetersdorfSachs}.  But again, that result is only stated for the adjacency matrix and its generality prevents an accompanying sharpness result.

We use the term \emph{automorphism  compatible} for the class of matrices to which our results apply.  This means that for an $n \times n$ matrix $M = [m_{ij}]$ associated with a graph $G$ on $n$ vertices and any automorphism $\phi$ of $G$, $m_{\phi(i),\phi(j)} = m_{ij}$ for all $i,j \in \{1, 2, \ldots, n\}$. Matrices falling into this class are the adjacency matrix, Laplacian matrix, signless Laplacian, normalized Laplacian, distance matrix, weighted Laplacians and some analogues for directed graphs (see Proposition \ref{prop:ACmatrices}). Note that this definition is stronger than is necessary.  If a matrix $M$ is compatible with any particular automorphism of $G$, then it can be decomposed using that  automorphism.

To prove our main result, we begin by considering automorphisms of a graph for which all orbits are of the same size; we denote these as \emph{uniform automorphisms}.  We consider this case first for two reasons: First, many of the most well-known graphs have a non-trivial uniform automorphism and second,  the general theory of equitable decompositions can be built from this class of autmorphisms.

We then extend these results to \emph{basic automorphisms}, those automorphisms whose orbits are all of size 1 or $k$, for some positive integer $k>1$. Such automorphisms can be viewed as a \emph{local symmetry} of a graph. We show that one surprising consequence of knowing a basic automorphism of a graph is that one can quickly calculate a subset of its associated eigenvalues using the theory of equitable decompositions developed here. That is, one can use local information regarding a graph's structure to determine properties of the graph's eigenvalues, which in general depend on the entire graph structure! We explain how to obtain all eigenvalues from \emph{any} automorphism in a separate paper. The key idea is that for any automorphism $\phi$, some power of $\phi$ is a basic automorphism.

This method of using local symmetries to find eigenvalues of a graph is perhaps most useful in analyzing the spectral properties of real-world networks. These networks are typically large and have a high degree of symmetry when compared, for instance, to randomly generated graphs \cite{MacArthur}. From a practical point of view, the size of these networks limit our ability to quickly compute their associated eigenvalues. However, their large degree of symmetry suggests that finding network symmetries is much more feasible and can be used to decompose the graph into more manageable pieces.

In Section \ref{sec:App} we describe how this method can be used to bound the spectral radius {and spectral gap (algebraic connectivity) associated with a network (graph), both of} which are important in the study of networks. For instance, the spectral radius can be used to study stability properties of a network related to its dynamics \cite{Bunimovich}. Similarly, the spectral gap is used to determine a number of dynamic properties on networks including synchronization thresholds and the rate of convergence to synchronization and consensus \cite{Pikovsky,Almendral,Atay}. If the matrix associated with a network is stochastic then the spectral gap is related to mixing times of the associated Markov chain, and to first-passage times of random walks on the network \cite{Donetti}.

The remainder of the paper is organized as follows. In Section \ref{sec:EP} we give the background, including key definitions regarding the class of graphs we consider in this paper.  We also review the basic theory of equitable partitions and give our first result which extends the theory of equitable partitions. We build our results to include both uniform and basic automorphisms in Sections \ref{sec:EqDecomp} and \ref{sec:basic}, respectively.
Section \ref{sec:App} describes how this theory can be used to estimate key properties of a network, i.e. large graphs with symmetries. We conclude in Section \ref{sec:conclusion} with some closing remarks including a few open questions regarding equitable decompositions.

\section{Graphs and Equitable Partitions}\label{sec:EP}

The main type of mathematical objects we consider in this paper are graphs. A \emph{graph} $G$ is made up of a finite set of vertices $V(G)=\{1,\dots,n\}$ and and a finite set of edges $E(G)$. The vertices of a graph are typically represented by points in the plane and an edge by a line or curve in the plane that connects two vertices. A graph can be \emph{undirected}, meaning that each edge $\{i,j\}\in E$ can be thought of as an unordered set or a multiset if $i=j$ ($\{i,i\}\in E$). For $\{i,j\}\in E$ we say that this edge is \emph{incident to} both vertex $i$ and vertex $j$ and if two vertices are connected by an edge then these vertices are said to be \emph{adjacent}. A graph is \emph{directed} when each edge is {\emph{directed}, in which case $(i,j)$ is an ordered tuple. In both a directed and undirected graph, a} \emph{loop} is an edge with only one vertex ($\{i, i\}\in E$).

The graphs we will consider in this paper are those that are either directed or undirected. Throughout the paper we let $G$ denote either a directed or undirected graph. It should be clear from the context whether we are considering a directed or undirected graph, with or without loops, etc. We also allow weighted graphs (both directed and undirected), that is, graphs in which each edge ( $\{i,j\}$ or $(i,j)$) is assigned a numerical weight ($w(i,j)$). See Proposition \ref{prop:ACmatrices} and Remark \ref{rem:WAmatrices}.

There are a number of matrices that can be associated with a given graph $G$ (see Section \ref{sec:EqDecomp}). Two of the most common, which we will use as examples throughout the paper are the adjacency matrix $A(G)$ of a graph and the Laplacian matrix $L(G)$ if $G$ is \emph{simple graph}, i.e. an undirected graph without loops. The \emph{adjacency matrix} $A=A(G)$ of the graph $G$ is the $0$-$1$ matrix given by
\[
a_{ij}=
\begin{cases}
1 &\text{if} \ \ \{i,j\}\in E(G)\\
0 &\text{otherwise}.
\end{cases}
\]
To define the Laplacian matrix of a simple graph let $D_G=\diag[\deg(1),\dots,\deg(n)]$ denote the \emph{degree matrix} of $G$. Then the \emph{Laplacian matrix} $L(G)$ of $G$ is the matrix $L(G)= D_G - A(G)$. In the case of a weighted graph, the \textit{weighted adjacency matrix} is given by the edge weights,
\[
a_{ij} =
\begin{cases}
w(i,j) &\text{if} \ \ \{i,j\}\in E(G)\\
0 &\text{otherwise}.
\end{cases}
\]

We let $\sigma(M)$ denote the \emph{eigenvalues} of the $n\times n$ matrix $M$. For us $\sigma(M)$ is a multiset with each eigenvalue in $\sigma(M)$ listed according to its multiplicity, where an eigenvalue is called \emph{simple} if it has multiplicity 1.

An \textit{equitable partition} of a graph's vertices $V(G)$ can be used to find a number of the eigenvalues of a matrix $M$ associated with a graph $G$. Specifically, an equitable partition generates a matrix $M_{\pi}$ whose eigenvalues are contained in $\sigma(M)$ (\cite{Godsil} \S 9.3, \cite{Cvet} \S 3.9). The matrix $M_{\pi}$ is constructed as follows.



\begin{defn}\label{def:EQ} \textbf{(Equitable Partition)}
Given a  graph $G$, and a matrix $M = [m_{i j}]$ associated with $G$,  a partition $\pi$ of $V(G)$, $V(G) = V_1 \cup \ldots \cup V_k$ is \defital{equitable} with respect to $G$ and $M$, if for all $i$, $j \in \{1, 2, \ldots, k\}$
\begin{equation}\label{eq:1}
\sum_{t \in V_j} m_{st} = d_{ij}
\end{equation}
is a constant $d_{ij}$ for any  $s \in V_i$.

The $k \times k$ matrix $M_\pi = d_{ij}$ is called the \defital{divisor matrix} of $M$ associated with the partition $\pi$. 
\end{defn}

{Definition \ref{def:EQ} is, in fact, an extension of the\emph{ standard definition} of an equitable partition, which is defined for \emph{simple graphs}, i.e. unweighted undirected graphs without loops. For such graphs the requirement that $\pi$ be an equitable partition is equivalent to the condition that any vertex $\ell \in V_i$ has the same number of neighbors in $V_j$ for all $i,j \in \{1, \ldots, k\}$ (for example, see p. 195-6 of  \cite{Godsil}). An equitable partition of a simple graph is given in the following example.}

\begin{example}\label{ex:1}

Let $G$ be the simple graph shown in Figure \ref{fig:ex1}. An equitable partition $\pi$ of $V(G)$ associated with the adjacency matrix $A=A(G)$ is
\[
V_1 = \{1,3,5,7\}, \ \ V_2 = \{ 2, 4, 6, 8 \}.
\]
The reason for this, following Definition \ref{def:EQ}, is that $d_{11} = 0, d_{12} = 2, d_{21} = 2, d_{22} = 3$.  The divisor matrix of $A$ associated with $\pi$ is then
\[
A_\pi = \left[\begin{array}{rr} 0 & 2 \\ 2 & 3 \end{array}\right]
\]
with eigenvalues $\sigma(A_\pi)=\{4,-1\}$, which are two of the eigenvalues of the original matrix $A$, for which
\[
\sigma(A)=\{4,1,1,0,-1,-1,-2,-2\}.
\]
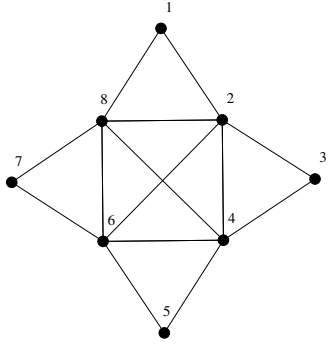
\begin{figure}
\begin{center}

\scalebox{.8}{
\begin{tikzpicture}[line cap=round,line join=round,>=triangle 45,x=1.0cm,y=1.0cm]
\draw (1.66,3.96) -- (1.68,1.96) -- (3.68,1.98) -- (3.66,3.98) -- cycle;
\draw (3.66,3.98) -- (3.68,1.98) -- (5.2020508075688785,2.9973205080756875) -- cycle;
\draw (1.66,3.96) -- (3.66,3.98) -- (2.642679491924312,5.502050807568878) -- cycle;
\draw (1.68,1.96) -- (1.66,3.96) -- (0.162050807568877486,2.942679491924312) -- cycle;
\draw (3.68,1.98) -- (1.68,1.96) -- (2.6973205080756877,0.43794919243112234) -- cycle;
\draw (1.66,3.96)-- (3.68,1.98);
\draw (3.66,3.98)-- (1.68,1.96);
\begin{scriptsize}
\draw  [fill=black]  (1.66,3.96) circle (2.5pt);
\draw  (1.7,4.32) node {$8$};
\draw  [fill=black] (1.68,1.96) circle (2.5pt);
\draw (1.82,2.32) node {$6$};
\draw  [fill=black] (3.68,1.98) circle (2.5pt);
\draw (3.82,2.34) node {$4$};
\draw  [fill=black] (3.66,3.98) circle (2.5pt);
\draw (3.8,4.34) node {$2$};
\draw [fill=black] (5.2020508075688785,2.9973205080756875) circle (2.5pt);
\draw (5.34,3.36) node {$3$};
\draw  [fill=black] (2.642679491924312,5.502050807568878) circle (2.5pt);
\draw (2.78,5.86) node {$1$};
\draw [fill=black] (0.162050807568877486,2.942679491924312) circle (2.5pt);
\draw (0.28,3.3) node {$7$};
\draw  [fill=black] (2.6973205080756877,0.43794919243112234) circle (2.5pt);
\draw (2.74,0.8) node {$5$};
\end{scriptsize}
\end{tikzpicture}
}
\end{center}

\caption{The graph $G$ considered in Example \ref{ex:1}, whose adjacency matrix has the equitable decomposition $V(G)=V_1\cup V_2$ where $V_1=\{1,3,5,7\}$ and $V_2=\{2,4,6,8\}$.}
 \label{fig:ex1}
\end{figure}
Alternatively, for the same graph and its Laplacian matrix $L=L(G)$, one can easily check that
\[
L_\pi = \left[\begin{array}{rr} 2 & -2 \\ -2 & 2 \end{array}\right].
\]
Again, the eigenvalues $\sigma(L)=\{0,4\}$ are among the eigenvalues of the Laplacian matrix
\[
\sigma(L)=\{0,4-\sqrt{6},4-\sqrt{6},2,4,6,4+\sqrt{6},4+\sqrt{6}\}.
\]
\end{example}

Containment of the eigenvalues $\sigma(A_\pi)\subset\sigma(A)$ and $\sigma(L_\pi)\subset\sigma(L)$ for the matrices in Example \ref{ex:1} is a consequence of a simple generalization of the result (in \cite{Godsil} Theorem 9.3.3, \cite{Cvet} Theorem 3.9.5), which is only stated for the adjacency matrix.

\begin{thm}\label{thm:EQsimple}
Let $G$ be a graph and $M$ a matrix associated with $G$. Suppose the partition $\pi$
\[V(G) = V_1 \cup \ldots \cup V_k\]
is equitable, and let $M_\pi$ be the associated divisor matrix.  Then $\sigma(M_\pi) \subseteq \sigma(M)$.
\end{thm}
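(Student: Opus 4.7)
The natural approach is the classical argument via the characteristic matrix of the partition, with a small bookkeeping step to handle multiplicities (since $\sigma$ denotes a multiset in this paper).

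First I would introduce the \emph{characteristic matrix} $P$ of $\pi$: the $n \times k$ matrix whose $j$-th column is the indicator vector $\chi_{V_j}\in\mathbb{C}^n$ of the cell $V_j$. Because the cells are disjoint and nonempty, the columns of $P$ are orthogonal and nonzero, hence linearly independent, so $P$ has rank $k$. The key identity is
\[
MP = P M_\pi.
\]
This is a direct computation from Definition \ref{def:EQ}: for $s\in V_i$ and any $j$, the entry $(MP)_{sj}$ is $\sum_{t\in V_j} m_{st}=d_{ij}$ by equitability, while $(PM_\pi)_{sj}=(M_\pi)_{ij}=d_{ij}$ since $s\in V_i$. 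So both matrices agree row by row.

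From $MP=PM_\pi$ the inclusion of eigenvalues as sets is immediate: if $M_\pi v=\lambda v$ with $v\neq 0$, then $M(Pv)=PM_\pi v=\lambda(Pv)$, and $Pv\neq 0$ because $P$ has linearly independent columns. To upgrade this to an inclusion of multisets (i.e.\ respecting multiplicity) I would argue that the column space $W:=\operatorname{col}(P)$ is an $M$-invariant subspace of $\mathbb{C}^n$ of dimension $k$, and that the matrix of $M|_W$ relative to the basis given by the columns of $P$ is precisely $M_\pi$. Extending this basis of $W$ to a basis of $\mathbb{C}^n$ yields a similarity
\[
M \sim \begin{bmatrix} M_\pi & X \\ 0 & Y \end{bmatrix}
\]
for some matrices $X$ and $Y$. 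The characteristic polynomial of the right-hand side factors as $\det(\lambda I-M_\pi)\det(\lambda I-Y)$, so $\det(\lambda I-M_\pi)$ divides $\det(\lambda I-M)$, which is exactly the multiset containment $\sigma(M_\pi)\subseteq\sigma(M)$.

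The argument is essentially routine; the only mild subtlety is to be careful that the multiset (not just set) containment really does follow, which is why I would invoke the block upper-triangular similarity rather than just the eigenvector pullback. The statement holds verbatim for any matrix $M$ associated with $G$, directed or undirected, weighted or not, because the proof uses nothing about $M$ beyond the combinatorial hypothesis encoded in equation (\ref{eq:1}).
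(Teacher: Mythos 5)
Your proof is correct. The paper gives no proof of this theorem at all --- it simply cites the standard adjacency-matrix result (Godsil--Royle Theorem 9.3.3) and asserts the generalization --- and your argument via the characteristic matrix identity $MP = PM_\pi$, the $M$-invariance of $\operatorname{col}(P)$, and the resulting block upper-triangular similarity is precisely that standard proof, correctly carried over to any matrix satisfying the equitability condition (\ref{eq:1}), with the added care of establishing the multiset (not merely set) containment via divisibility of characteristic polynomials.
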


The reason equitable partitions are of interest is that they allow us a way of computing some spectral properties of a matrix based on structural properties of an associated graph. However, this information is incomplete. A natural question is whether this missing information can be recovered in the same way the eigenvalues of an equitable partition are found. That is, for a matrix $M$ can we find other matrices $B_1,\dots,B_k$ besides $M_{\pi}$ whose collective eigenvalues give \emph{all} the eigenvalues the original matrix $M$?

With this in mind, the following lemma generalizes a well-known formula for the eigenvalues of a circulant matrix (see \cite{HornJohn}, p. 100). This lemma  will be useful in developing a method for finding all the remaining eigenvalues of the matrix $M$ for a certain class of equitable partitions.


\begin{lem}\label{lem:1}
Let $C\in\mathbb{C}^{n \times n}$ be the block-circulant matrix
\[
C = \left[ \begin{array}{lllll}
C_0 & C_1 & C_2 & \ldots & C_{k-1} \\
C_{k-1} & C_0 & C_1 & \ldots & C_{k-2}\\
C_{k-2} & C_{k-1} & C_0 & \ddots & \vdots\\
\vdots & \vdots & \ddots & \ddots & C_1 \\
C_1 & C_2 & \ldots & C_{k-1} & C_0
\end{array}\right], \text{ where each block is } r \times r,
\]
and let
\[
S = \left[\begin{array}{lllll}
I & I & I & \ldots & I \\
I & \omega I & \omega^2 I & \ldots & \omega^{k-1} I \\
I & \omega^2 I & \omega^4 I & \ldots & \omega^{2(k-1)} I \\
\vdots & \vdots & \vdots & & \vdots \\
I & \omega^{k-1} I & \omega^{2(k-1)} I & \ldots & \omega^{(k-1)^2} I
 \end{array}\right],
\]
where $\omega = e^{2\pi i /k}$, and $S$ is partitioned conformally with $C$. Then
\[
S^{-1} C S = B_0 \oplus B_1 \oplus \ldots \oplus B_{k-1},
\]
where
\[
B_j = \sum_{m = 0}^{k-1}\omega^{jm}C_m, \quad j = 0, 1, \ldots, k-1.
\]
Consequently $\sigma(C) = \sigma(B_0) \cup \sigma(B_1) \cup \ldots \cup \sigma(B_{k-1})$.
\end{lem}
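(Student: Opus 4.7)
\medskip

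\noindent\textbf{Proof plan.} The strategy is a direct verification: show $CS = SB$, where $B = B_0 \oplus B_1 \oplus \cdots \oplus B_{k-1}$, by comparing the $(p,j)$ block of each side. Invertibility of $S$ comes for free: observe that $S = F_k \otimes I_r$ where $F_k = [\omega^{pj}]_{p,j=0}^{k-1}$ is the (unnormalized) $k \times k$ discrete Fourier matrix, so $S^{-1} = \tfrac{1}{k}(F_k^* \otimes I_r)$ exists. So the content of the lemma is really the identity $CS = SB$.

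The main computation proceeds as follows. Index blocks by $p,q,j \in \{0,1,\ldots,k-1\}$. The block-circulant structure means the $(p,q)$ block of $C$ is $C_{(q-p)\bmod k}$, while the $(q,j)$ block of $S$ is $\omega^{qj} I$, and the $(j,j)$ block of $B$ is $B_j$ with all other blocks zero. I would then compute
\[
(CS)_{pj} \;=\; \sum_{q=0}^{k-1} C_{(q-p)\bmod k}\,\omega^{qj} I \;=\; \omega^{pj}\sum_{m=0}^{k-1} \omega^{mj} C_m \;=\; \omega^{pj} B_j,
\]
where the key step is the reindexing $m = (q-p)\bmod k$, which peels off a factor of $\omega^{pj}$ from every term using $\omega^k = 1$. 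On the other side,
\[
(SB)_{pj} \;=\; \omega^{pj} I \cdot B_j \;=\; \omega^{pj} B_j,
\]
which matches block-by-block, giving $CS = SB$ and hence $S^{-1} C S = B_0 \oplus B_1 \oplus \cdots \oplus B_{k-1}$.

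The eigenvalue assertion is then immediate: since $C$ is similar to the block-diagonal matrix $B_0 \oplus \cdots \oplus B_{k-1}$, its multiset of eigenvalues is the union (as multisets) of the $\sigma(B_j)$.

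The only place one has to be slightly careful is the index bookkeeping in the block-circulant definition, i.e., making sure the substitution $m = (q-p)\bmod k$ is well-defined when $q < p$; this is where $\omega^k = 1$ is used implicitly, since $\omega^{(m+p)j} = \omega^{mj}\omega^{pj}$ holds regardless of whether $m + p$ exceeds $k-1$. Apart from that, the argument is purely mechanical; there is no genuine obstacle, which is why the lemma is really just the standard Fourier diagonalization of circulants dressed up at the block level.
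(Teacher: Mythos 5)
Your proof is correct, and it establishes the same similarity by the same block Fourier matrix $S$, but the verification is organized differently from the paper's. The paper computes $S^{-1}CS$ directly: it first notes $S^{-1}=\tfrac{1}{k}S^*$, writes the $(p,q)$ block as a double sum over $j$ and $l$, substitutes $m=l-j$, and then uses the geometric-sum identity $\sum_{j}\omega^{j(q-p)}=0$ for $p\neq q$ to kill the off-diagonal blocks and recover $B_{p-1}$ on the diagonal. You instead verify the one-sided identity $CS=SB$ with a single sum and the reindexing $m=(q-p)\bmod k$, pushing all of the orthogonality into the (standard) invertibility of $S=F_k\otimes I_r$. This buys a shorter computation with no cancellation argument, at the cost of having to justify invertibility separately --- which your Kronecker-product observation does cleanly, and which the paper also needs anyway to write $S^{-1}$. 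Your index bookkeeping ($(p,q)$ block of $C$ equal to $C_{(q-p)\bmod k}$, and $\omega^{(m+p)j}=\omega^{mj}\omega^{pj}$ via $\omega^k=1$) checks out against the displayed form of $C$, and the multiset-union statement for $\sigma(C)$ follows from block-diagonal similarity exactly as you say.
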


\begin{proof}
The matrix $S$ has orthogonal columns, so $S^{-1} = (1/k)S^*$ where $*$ denoted the conjugate transpose.

Then
\[
(S^{-1}CS)_{pq} = \frac{1}{k} \sum_{j=1}^k \sum_{l=1}^k \bar{\omega}^{(p-1)(j-1)}C_{l-j} \omega^{(l-1)(q-1)},
\]
where $l-j$ is interpreted modulo $k$.

Upon the substitution $m = l-j$, this becomes
\[
(S^{-1} C S)_{pq} = \frac{1}{k}\sum_{j=1}^k \sum_{m=0}^{k-1} \bar{\omega}^{(p-1)(j-1)}C_{m} \omega^{(m+j-1)(q-1)}
= \frac{1}{k}\sum_{m=0}^{k-1}\omega^{(m-1)(q-1)+(p-1)}C_{m} \sum_{j=1}^k  {\omega}^{j(q-p)},
\]
which is 0 if $p \neq q$.  If $p = q$, this reduces to
\[
(S^{-1} C S)_{pp} = \frac{1}{k}\sum_{m=0}^{k-1}\omega^{m(p-1)}C_{m} = B_{p-1}.
\]
Therefore,
\[
S^{-1} C S = B_0 \oplus B_1 \oplus \ldots \oplus B_{k-1}
\]
and $\sigma(C)=\sigma(B_0)\cup \sigma(B_1) \cup \cdots \cup \sigma(B_{k-1})$.

\end{proof}


To see how Lemma \ref{lem:1} is related to equitable partitions we apply this lemma to the matrices considered in  Example \ref{ex:1}.
\vspace{.2cm}

\noindent \textbf{Example \ref{ex:1} continued}.
The adjacency matrix for the graph $G$ can be expressed as the block circulant matrix
\[
A = \left[\begin{array}{rrrr}
A_0 & A_1 & A_2 & A_3 \\
A_3 & A_0 & A_1 & A_2 \\
A_2 & A_3 & A_0 & A_1 \\
A_1 & A_2 & A_3 & A_0
\end{array}\right],
\]
\[\text{with} \ A_0 = \left[\begin{array}{rr} 0 & 1 \\ 1 & 0 \end{array}\right], \ A_1 = \left[\begin{array}{rr} 0 & 0 \\ 1 & 1 \end{array}\right], \ A_2= \left[\begin{array}{rr} 0 & 0 \\ 0 & 1 \end{array}\right], \ \text{and} \ A_3 = \left[\begin{array}{rr} 0 & 1 \\ 0 & 1 \end{array}\right].
\]
Then for $\omega = i$, the matrices $B_0, B_1, B_2, B_3$ in Lemma \ref{lem:1} are given by
\[
B_j = \left[\begin{array}{rr} 0 & 1 \\ 1 & 0 \end{array}\right] +
i^j \left[\begin{array}{rr} 0 & 0 \\ 1 & 1 \end{array}\right]+
(-1)^j \left[\begin{array}{rr} 0 & 0 \\ 0 & 1 \end{array}\right]+
(-i)^j  \left[\begin{array}{rr} 0 & 1 \\ 0 & 1 \end{array}\right],
\]
which gives us
\[
B_0 = \left[\begin{array}{rr} 0 & 2 \\ 2 & 3 \end{array}\right], \ B_1 =  \left[\begin{array}{cc} 0 & 1-i \\ 1+i & -1 \end{array}\right], \ B_2 = \left[\begin{array}{rr} 0 &0 \\ 0 & -1 \end{array}\right], \ B_3 =  \left[\begin{array}{cc} 0 & 1+i \\ 1-i & -1 \end{array}\right].
\]
We recognize that $B_0$ is the divisor matrix associated with $G$ and $A$. We already know that $\sigma(B_0) = \{4,-1\}$ but can compute that $\sigma(B_1) = \sigma(B_3) = \{1,-2\}$ and $\sigma(B_2) = \{0,-1\}$. Hence
\[
\sigma(A)=\sigma(B_0)\cup\sigma(B_1)\cup\sigma(B_2)\cup\sigma(B_3) = \{4,1,1,0,-1,-1,-2,-2\},
\]
 by Lemma \ref{lem:1}.

If we carry out this calculation for the Laplacian matrix $L$ in Example \ref{ex:1}, we obtain the matrices
\[
B_0 = \left[\begin{array}{rr} 2 & -2 \\ -2 & 2 \end{array}\right], \ B_1 =  \left[\begin{array}{cc} 2 & -1+i \\ -1-i & 6 \end{array}\right], \ B_2 = \left[\begin{array}{rr} 2 &0 \\ 0 & 6 \end{array}\right], \ B_3 =  \left[\begin{array}{cc} 2 & -1-i \\ -1+i & 6 \end{array}\right].
\]
For these matrices $\sigma(B_0) = \{0,4\}$, $\sigma(B_1) = \sigma(B_3) = \{4-\sqrt{6}, 4+\sqrt{6}\}$ and $\sigma(B_2) = \{2, 6\}$, and again
\[
\sigma(L) = \sigma(B_0)\cup\sigma(B_1)\cup\sigma(B_2)\cup\sigma(B_3).
\]

The key reason it is possible to decompose the matrices $A$ and $L$ into smaller matrices while preserving their eigenvalues is the symmetry present in the graph $G$. In the following section we describe how a matrix that has a structure that mimics the structure of a graph can be decomposed with respect to the graph's symmetries. As in Example \ref{ex:1}, the result is a number of smaller matrices whose collective eigenvalues are the same as the eigenvalues of the original matrix.

\section{Equitable Decompositions}\label{sec:EqDecomp}
The lemma and example of the previous section point to a general method that can be applied to find all the eigenvalues of matrices associated with a particular kind of graph. The following definitions and notation are needed to state the result.

\begin{defn}
An \emph{automorphism} $\phi$ of a (weighted or unweighted) graph $G$ is a permutation of $V(G)$ such that the adjacency matrix $A$ satisfies $a_{ij} = a_{\phi(i) \phi(j)}$ for each pair of vertices $i$ and $j$.
Notice that in the case of an unweighted graph, this is equivalent to saying
$i$ and $j$ are adjacent in $G$ if and only if $\phi(i)$ and $\phi(j)$ are adjacent in $G$. The set of all automorphisms of $G$ is a group, denoted by $\Aut(G)$.
\end{defn}

Importantly, a graph's group of automorphisms characterizes the symmetries in the graph's structure. For a graph $G$ with automorphism $\phi$, we define the relation $\sim$ on $V(G)$ by $u \sim v$ if and only if $v = \phi^j(u)$ for some nonnegative integer $j$. It follows that $\sim$ is an equivalence relation on $V(G)$, and the equivalence classes are called the \emph{orbits} of $\phi$.

In Example \ref{ex:1} in the previous section, $\phi=(1, 3, 5, 7)(2, 4, 6, 8)$ is an automorphism of the graph $G$ and $V_1 = \{1, 3, 5, 7 \}$, $V_2 = \{2,4 , 6, 8\}$ are the corresponding orbits.  As we already saw $V(G)=V_1\cup V_2$ gives an equitable partition of $V(G)$.  This is no accident and illustrates the following basic fact (see \cite{Godsil}, p. 196).
\begin{prop}\label{prop:1}
Let $\phi$ be an automorphism of $G$.  Then the orbits of $\phi$ give an equitable partition of $V(G)$.
\end{prop}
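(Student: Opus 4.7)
The plan is to verify Definition \ref{def:EQ} directly for the partition $\pi$ whose cells are the orbits $V_1,\dots,V_k$ of $\phi$. Fix indices $i,j$ and two vertices $s,s' \in V_i$; I want to show $\sum_{t\in V_j} m_{st} = \sum_{t\in V_j} m_{s't}$, so that the common value can be declared $d_{ij}$. Since $M$ is an automorphism-compatible matrix (e.g.\ the adjacency matrix), the equation $m_{\phi(u)\phi(v)} = m_{uv}$ holds for all $u,v$, and this is the only property of $M$ the argument will use.

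The first step is to pin down how $s$ and $s'$ are related. Because they lie in the same orbit, $s' = \phi^a(s)$ for some nonnegative integer $a$. The automorphism group $\Aut(G)$ is closed under composition, so $\phi^a$ is again an automorphism, and iterating the compatibility relation $a$ times gives $m_{st} = m_{\phi^a(s)\phi^a(t)} = m_{s'\,\phi^a(t)}$ for every $t \in V(G)$.

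The second step is to check that $\phi^a$ permutes each orbit setwise. If $t \in V_j$ then $\phi^a(t)$ is obtained from $t$ by applying a power of $\phi$, so $t \sim \phi^a(t)$ and therefore $\phi^a(t) \in V_j$. Hence $\phi^a(V_j) \subseteq V_j$, and since $\phi^a$ is a bijection of the finite set $V_j$ into itself, $\phi^a(V_j) = V_j$. Substituting $t' = \phi^a(t)$ in the sum then yields
\[
\sum_{t \in V_j} m_{st} \;=\; \sum_{t \in V_j} m_{s'\,\phi^a(t)} \;=\; \sum_{t' \in V_j} m_{s't'},
\]
which is the required equality. Thus the common value defines $d_{ij}$ and $\pi$ is equitable.

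There is no real obstacle here: the proposition is essentially an unpacking of the definition of ``orbit'' together with the definition of ``automorphism-compatible matrix.'' The only place a reader could stumble is in justifying that each orbit is invariant under \emph{every} power of $\phi$ (not just under $\phi$ itself), which is the point of the second step above. In the classical simple-graph setting the same argument specializes to the familiar statement that a vertex $s \in V_i$ has the same number of neighbors in $V_j$ as does $\phi^a(s)$, recovering the usual formulation of Proposition \ref{prop:1}.
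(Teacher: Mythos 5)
Your proof is correct: the paper does not prove Proposition \ref{prop:1} itself but simply cites Godsil, and your argument is exactly the standard one being invoked there, stated for general automorphism-compatible matrices (using that $\phi^a$ is again an automorphism and that each orbit is $\phi^a$-invariant). No gaps.
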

\noindent It is worth mentioning that the converse of proposition \ref{prop:1} is not true (see \cite{ChanGodsil}, p. 84).

 To use this connection between automorphisms and equitable partitions we consider those matrices associated with a graph $G$ whose structure mimics the structure, in particular the symmetries, of $G$.

\begin{defn}\label{def:autocomp}
Let $G$ be a graph on $n$ vertices. An $n \times n$ matrix $M = [m_{ij}]$ is \emph{automorphism compatible} on $G$ if, given any automorphism $\phi$ of $G$ and any $i, j \in \{1, 2, \ldots, n\}$,
$m_{\phi(i) \phi(j)} = m_{i j}$.
\end{defn}

Some of the most well-known matrices that are associated with a graph are automorphism compatible.

\begin{prop}\label{prop:ACmatrices}
Given an undirected graph $G$, its adjacency matrix, combinatorial Laplacian matrix, signless Laplacian matrix, normalized Laplacian matrix, and distance matrix are all automorphism compatible. Also, the weighted adjacency matrix of a weighted graph is automorphism compatible.

\end{prop}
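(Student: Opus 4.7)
The plan is to verify the defining identity $m_{\phi(i)\phi(j)} = m_{ij}$ directly for each of the six named matrices, leveraging two elementary facts about any automorphism $\phi$ of $G$: (a) $\phi$ preserves adjacency, which is the definition, and (b) $\phi$ preserves vertex degree, since $\phi$ restricts to a bijection between the neighborhood $N(i)$ and $N(\phi(i))$. Most of the checks will then reduce to a one-line computation.

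First I would dispatch the adjacency matrix and the weighted adjacency matrix: for both, the entry $a_{ij}$ is (up to a weight) the indicator of the edge between $i$ and $j$, so the equality $a_{\phi(i)\phi(j)} = a_{ij}$ is literally the automorphism hypothesis (in the weighted case, I would note that the paper's definition of an automorphism on a weighted graph, via $a_{ij} = a_{\phi(i)\phi(j)}$, already encodes preservation of edge weights). Next I would dispose of the degree matrix $D_G = \diag[\deg(1), \ldots, \deg(n)]$ by remarking that (b) gives $\deg(\phi(i)) = \deg(i)$, and that its off-diagonal entries are $0$ and thus trivially preserved. This immediately yields automorphism compatibility for the combinatorial Laplacian $L = D_G - A$ and the signless Laplacian $Q = D_G + A$, since the class of automorphism compatible matrices is closed under linear combinations. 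For the normalized Laplacian, whose $(i,j)$ entry is $1$ on the diagonal and $-1/\sqrt{\deg(i)\deg(j)}$ on edges and $0$ otherwise, the same two facts together give the desired invariance.

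The one step that needs a genuine argument (rather than being a restatement of the hypothesis) is the distance matrix. Here I would argue that if $i = v_0, v_1, \ldots, v_\ell = j$ is a walk in $G$, then $\phi(i) = \phi(v_0), \phi(v_1), \ldots, \phi(v_\ell) = \phi(j)$ is a walk of the same length, by repeated application of (a); applying the same observation to $\phi^{-1}$ (which is also an automorphism) yields a length-preserving bijection between walks $i \to j$ and walks $\phi(i) \to \phi(j)$. Taking minima gives $d(\phi(i),\phi(j)) = d(i,j)$, which is exactly automorphism compatibility for the distance matrix.

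The main obstacle, insofar as there is one, is conceptual rather than technical: one must be slightly careful with the normalized Laplacian in the presence of isolated vertices (where $1/\sqrt{\deg(i)}$ is undefined) and with the distance matrix on disconnected graphs (where one either adopts the convention $d(i,j) = \infty$ or restricts to connected components); in both exceptional cases the invariance still holds because $\phi$ permutes isolated vertices among themselves and preserves connected components setwise. Apart from those conventions, the proposition is a routine case-by-case verification.
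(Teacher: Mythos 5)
Your proposal is correct and follows essentially the same route as the paper's proof: degree preservation gives compatibility of $D_G$, closure under linear combinations handles $L$ and $Q$, a direct entry computation handles the normalized Laplacian, and preservation of distances handles the distance matrix. Your extra care with the walk-by-walk argument for $d(\phi(i),\phi(j))=d(i,j)$ and with the isolated-vertex/disconnected-graph conventions only makes explicit what the paper asserts (the paper sidesteps the first issue by assuming every vertex has positive degree).
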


\begin{proof}
Recall the definitions:
\begin{itemize}
\item The Laplacian matrix of $G$ is $L(G)= D_G - A(G)$,
\item The signless Laplacian matrix of $G$ is $Q(G)= D_G + A(G)$,
\item The normalized Laplacian matrix of $G$ is ${\mathcal L}(G) = D_G^{-1/2} L(G) D_G^{-1/2}$.
\end{itemize}
For the normalized Laplacian matrix we assume also that every vertex of $G$ has positive degree.

{Since every vertex in an orbit of an automorphism must have the same degree, the matrix $D_G$ is automorphism compatible. {Furthermore, if $A=A(G)$ it then} follows that $aD_G + bA$ is automorphism compatible for all $a, b \in \C$.  This includes the Laplacian and signless Laplacian matrix of $G$.  Now consider the normalized Laplacian matrix $\mathcal{L}=\mathcal{L}(G)$, which can be written as
\[\sL = I - D_G^{-\frac{1}{2}}AD_G^{-\frac{1}{2}}.\]
The identity matrix is clearly automorphism compatible and}
\[
{\left( D_G^{-\frac{1}{2}}AD_G^{-\frac{1}{2}} \right)_{\phi(i) \phi(j)}
=d_{\phi(i)  }^{-\frac{1}{2}}a_{\phi(i) \phi(j)}d_{  \phi(j)}^{-\frac{1}{2}}
=d_{i }^{-\frac{1}{2}}a_{i j}d_{j  }^{-\frac{1}{2}}
=\left( D_G^{-\frac{1}{2}}AD_G^{-\frac{1}{2}} \right)_{i j},}
\]
where $d_{i} = deg(i)$.
{It follows that the normalized Laplacian matrix is likewise automorphism compatible.}

Recall that the \emph{distance} $d(i,j)$ in $G$ is the length of the shortest path in $G$ from $i$ to $j$. The \emph{distance matrix} $\mathcal D=\mathcal{D}(G)$ is the $n \times n$ matrix whose $i,j$ entry is $d_{ij}=d(i,j)$.
The definition of a graph automorphism implies that for any two vertices $i$, $j$ in a graph, {the distance} $d(\phi(i),\phi(j)) = d(i,j)$. Therefore, the distance matrix is also automorphism compatible.
\end{proof}

\begin{remark}\label{rem:WAmatrices}
The Laplacian matrix, signless Laplacian matrix, normalized Laplacian matrix, and distance matrix can all be interpreted as weighted adjacency matrices for the weighted graphs their entries suggest. In fact, the notions of weighted adjacency matrix and automorphism compatible matrix are interchangeable; we can interpret any automorphism compatible matrix as a weighted adjacency matrix of the appropriate weighted graph.
\end{remark}

We next give the simplest version of our main result, in which we consider graphs that have an automorphism with all orbits of the same size. The reason we present this special case first is because it is both cleaner and because it applies to so many of the most well-known graphs.

\begin{defn}\label{def:UniAut} \textbf{(Uniform Automorphisms)}
An automorphism $\phi$ of a graph $G$ has uniform orbit size if every orbit in $\phi$ has the same cardinality.  We call such an automorphism a \emph{uniform automorphism} and this common cardinality is its \emph{size}.
\end{defn}

An important consequence of having a uniform automorphism $\phi$ is that either $\phi$ has size $k=1$ and all vertices of $G$ are fixed by $\phi$ or $k>1$ and every vertex belongs to a orbit of size $k$ that includes other vertices.

\begin{defn}\label{def:transversal}
Let $\phi$ be a uniform automorphism of the graph $G$ on $n$ vertices of size $k>1$.  Choose one vertex from each orbit, and let $\mathcal{T}$ be the set of these chosen vertices.  We say that $\cT$ is a \emph{transversal} of the orbits of $\phi$.  Further we define the set
\[
\cT_\ell = \{\phi^\ell(v) \ | \ v \in \cT\}
\]
for $\ell = 0,1, \ldots, k-1$ to be the $\ell$th power of $\cT$. If $k=1$ then $\phi=id$ is trivial and $\cT=V(G)$.
\end{defn}

\begin{examplestar}
For the graph $G$ considered in example \ref{ex:1}, $\cT_0 = \{1, 2\}$ is a transversal with $\cT_1 = \{3,4\}$, $\cT_2 = \{5,6\}$, and $\cT_3 = \{7,8\}$ its powers.
\end{examplestar}

Labeling the vertices of a graph $G$ by the powers of one of its transversals is key to being able to apply Lemma \ref{lem:1} to an automorphism compatible matrix on $G$. The notion of a transversal consequently allows us to decompose any matrix compatible with $G$ with respect to any of its uniform automorphisms.

\begin{thm}\label{thm:1} \textbf{(Uniform Equitable Decompositions)}
Let $G$ be a graph on $n$ vertices, let $\phi$ be a uniform automorphism of $G$ of size $k$, let $\cT_0$ be a transversal of the orbits of $\phi$, and let $M$ be an automorphism compatible matrix  on $G$.  Set $M_\ell = M[\cT_0, \cT_\ell]$, $\ell = 0, 1, \ldots, k-1$, let $\omega=e^{2 \pi i /k}$, and let
\[
B_j = \sum_{\ell=0}^{k-1} \omega^{j\ell}M_\ell, \ \ j = 0, 1, \ldots, k-1.
\]
Then
\begin{equation}\label{eq:spectrum}\sigma(C) = \sigma(B_0) \cup \ldots \sigma(B_{k-1}),\end{equation}
and $B_0$ is the divisor matrix of $M$ associated with the partition given by the $n/k$ orbits of $\phi$.

\end{thm}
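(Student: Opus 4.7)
The plan is to reduce Theorem \ref{thm:1} to Lemma \ref{lem:1} by exhibiting a vertex ordering in which the matrix $M$ becomes block-circulant. Concretely, I would label $\cT_0=\{v_1,\dots,v_r\}$ with $r=n/k$, and reorder the vertices of $G$ as
\[
v_1,\dots,v_r,\ \phi(v_1),\dots,\phi(v_r),\ \phi^2(v_1),\dots,\phi^2(v_r),\ \dots,\ \phi^{k-1}(v_1),\dots,\phi^{k-1}(v_r),
\]
so the first $r$ positions index $\cT_0$, the next $r$ index $\cT_1$, and so on. Since $\phi$ is a uniform automorphism of size $k$, no vertex is repeated and every vertex appears, so this is a genuine permutation of $V(G)$. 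Conjugating $M$ by the associated permutation matrix $P$ yields a matrix $\widetilde M = P^T M P$ whose $(a,b)$ block (with $0\le a,b\le k-1$) is exactly $M[\cT_a,\cT_b]$.

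The key computation is the identity $M[\cT_a,\cT_b] = M_{(b-a)\bmod k}$. This is where automorphism compatibility does all the work: for any $v_i,v_j\in\cT_0$,
\[
\bigl(M[\cT_a,\cT_b]\bigr)_{ij}=m_{\phi^a(v_i),\phi^b(v_j)}=m_{v_i,\phi^{b-a}(v_j)}=\bigl(M_{(b-a)\bmod k}\bigr)_{ij},
\]
by applying $\phi^{-a}$ to both indices (using that $\phi^k$ is the identity on $V(G)$). Consequently $\widetilde M$ is a block-circulant matrix of the precise form appearing in Lemma \ref{lem:1}, with $C_\ell=M_\ell$. Applying that lemma yields
\[
S^{-1}\widetilde M\, S = B_0\oplus B_1\oplus\cdots\oplus B_{k-1},\qquad B_j=\sum_{\ell=0}^{k-1}\omega^{j\ell}M_\ell,
\]
and since $\widetilde M=P^TMP$ is similar to $M$, we get $\sigma(M)=\sigma(B_0)\cup\cdots\cup\sigma(B_{k-1})$, which is the desired equation \eqref{eq:spectrum} (the symbol $C$ in the statement being an evident typo for $M$).

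It remains to identify $B_0$ with the divisor matrix of $M$ for the orbit partition. By Proposition \ref{prop:1} the orbits $V_1,\dots,V_r$ of $\phi$ form an equitable partition for $M$, with $V_i=\{v_i,\phi(v_i),\dots,\phi^{k-1}(v_i)\}$. By Definition \ref{def:EQ} the $(i,j)$ entry of $M_\pi$ equals $\sum_{t\in V_j}m_{v_i,t}$. On the other hand,
\[
(B_0)_{ij}=\sum_{\ell=0}^{k-1}(M_\ell)_{ij}=\sum_{\ell=0}^{k-1}m_{v_i,\phi^\ell(v_j)}=\sum_{t\in V_j}m_{v_i,t},
\]
so $B_0=M_\pi$.

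The only real subtlety will be the bookkeeping in the first step, namely showing $M[\cT_a,\cT_b]=M_{(b-a)\bmod k}$ with the indices and reduction modulo $k$ matched so as to fit the statement of Lemma \ref{lem:1} verbatim; beyond that, everything is a direct application of the lemma and the definitions. The case $k=1$ is a trivial edge case where $M=B_0=M_\pi$.
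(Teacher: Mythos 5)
Your proposal is correct and follows essentially the same route as the paper: reorder the vertices by the powers $\cT_0,\dots,\cT_{k-1}$ of the transversal, use automorphism compatibility to show the reordered matrix is block circulant (the paper phrases this as $M[\cT_s,\cT_t]=M[\cT_{s+1},\cT_{t+1}]$ rather than your explicit $M[\cT_a,\cT_b]=M_{(b-a)\bmod k}$, but these are the same observation), apply Lemma \ref{lem:1}, and identify $B_0$ with the divisor matrix by the identical entrywise computation. Your reading of $C$ as a typo for $M$ in \eqref{eq:spectrum} is also consistent with the paper's proof.
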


\begin{proof}
We may assume that the vertices of $G$ and the rows and columns of $M$ are labeled in the order $\cT_0, \ldots, \cT_{k-1}$.  Since $M$ is automorphism compatible,
$M[\cT_s, \cT_t] =M[\phi(\cT_s), \phi(\cT_t)] = M[\cT_{s+1}, \cT_{t+1}]$ for all $s, t \in \{0, 1, \ldots, k-1\}$ (Note $\phi(\cT_{k-1}) = \cT_0$), and $M$ is a block circulant matrix.
Then (\ref{eq:spectrum}) follows from Lemma \ref{lem:1}.

Since $B_0 = \sum_{\ell=0}^{k-1} M_\ell$, letting $V_j$ be the $j$th orbit, we know that
\[
(B_0)_{ij} = \sum_{\ell=0}^{k-1} (M_\ell)_{ij} = \sum_{\ell=0}^{k-1}(M[\cT_0, \cT_\ell])_{ij} = \sum_{\ell=0}^{k-1}m_{i \phi^\ell(j)} = \sum_{\ell \in V_j} m_{i\ell} = d_{ij}
\]
by Definition \ref{def:EQ}. This completes the proof.
\end{proof}

Recall from Lemma \ref{lem:1} that
\begin{equation}\label{eq:matrixdecomp}
S^{-1}MS = B_0 \oplus B_1 \oplus \cdots \oplus B_{k-1}.
\end{equation}

Since $B_0$ is the divisor matrix induced by the orbits of an automorphism, it seems natural to call (\ref{eq:matrixdecomp}) an \textit{equitable decomposition} of the matrix $M$ (or of the graph $G$ if $M=A(G)$ is its adjacency matrix). The major difference between and equitable decomposition and an equitable partition is that, instead of preserving a few eigenvalues via an equitable partition, an equitable decomposition maintains all the eigenvalues of the given matrix.

An important distinction between the usual diagonalization of a matrix and the block diagonalization given in equation \eqref{eq:matrixdecomp} is that the latter requires no knowledge of the eigenvalues or eigenvectors of $M$, only of the automorphism $\phi$. That is, once a uniform automorphism of a graph is known, we can immediately decompose any automorphism compatible matrix $M$ without finding any other spectral quantities. Still, the resulting matrices will have the same eigenvalues as the original matrix $M$.

Although Theorem \ref{thm:1} holds for both directed and undirected graphs, the existence of symmetries in an undirected graph has some important consequences for the number of simple eigenvalues any automorphism compatible matrix can have.

\begin{cor}\label{cor:simpleevals}
Let $G$ be a graph on $n$ vertices with a uniform automorphism $\phi$ of size $k >1$, let $M$ be a symmetric automorphism compatible matrix of $G$, and let $r = n/k$.  Then
\begin{enumerate}
\item\label{item1} If $k$ is odd, there are at most $r$ simple eigenvalues of $G$.
\item\label{item2} If $k$ is even, there are at most $2r$ simple eigenvalues of $G$.
\item\label{item3} The bounds in parts \ref{item1} and \ref{item2} are sharp in the sense that equality holds for infinitely many graphs $G$.
\end{enumerate}
\end{cor}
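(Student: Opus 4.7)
The plan is to apply Theorem \ref{thm:1} to write $\sigma(M) = \bigcup_{j=0}^{k-1}\sigma(B_j)$ with each block $B_j$ of size $r \times r$, and then use the extra symmetry provided by $M^T = M$ to identify $\sigma(B_j)$ with $\sigma(B_{k-j})$, forcing the eigenvalues coming from such a pair to be non-simple in $\sigma(M)$.

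The first step is the identity $M_{k-\ell} = M_\ell^T$ for each $\ell$. Indeed, the proof of Theorem \ref{thm:1} shows that $M$ is block circulant in the ordering $\cT_0, \cT_1, \ldots, \cT_{k-1}$, so $M[\cT_\ell, \cT_0] = M[\cT_0, \cT_{-\ell}] = M_{k-\ell}$; on the other hand $M^T = M$ gives $M[\cT_\ell,\cT_0] = M[\cT_0,\cT_\ell]^T = M_\ell^T$. A short index substitution $m = k-\ell$ applied to $B_j^* = \sum_\ell \omega^{-j\ell} M_\ell^T = \sum_\ell \omega^{-j\ell} M_{k-\ell}$ then yields $B_j^* = B_j$, so every block is Hermitian with $r$ real eigenvalues. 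Assuming $M$ is real (as in every case of Proposition \ref{prop:ACmatrices}), the same substitution shows $\overline{B_j} = \sum_\ell \omega^{-j\ell} M_\ell = B_{k-j}$, and since the eigenvalues of each $B_j$ are real, $\sigma(B_{k-j}) = \overline{\sigma(B_j)} = \sigma(B_j)$.

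The count is then immediate. Whenever $j \neq k-j$, the paired blocks $B_j$ and $B_{k-j}$ contribute every eigenvalue twice to $\sigma(M)$, so no such eigenvalue can be simple in $\sigma(M)$. Hence the simple eigenvalues of $M$ can arise only from blocks with $j = k-j$: always $B_0$, and additionally $B_{k/2}$ when $k$ is even. Each such block contributes at most $r$ eigenvalues, which proves parts \ref{item1} and \ref{item2}. For part \ref{item3}, the simplest family achieving equality is the cycles $C_k$ with their order-$k$ rotational automorphism, for which $r = 1$: the adjacency spectrum is $\{2\cos(2\pi j/k) : j = 0, \ldots, k-1\}$, which has exactly one simple eigenvalue when $k$ is odd (namely $2$) and exactly two when $k$ is even (namely $\pm 2$). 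Letting $k$ range over all odd or all even integers exceeding $1$ produces infinitely many sharp examples in each case.

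The main obstacle is the pairing step: one must verify $\overline{B_j} = B_{k-j}$ carefully, which rests on the identity $M_\ell^T = M_{k-\ell}$ and on distinguishing the transpose from the Hermitian adjoint in the Fourier-type sums defining the $B_j$. A secondary subtlety is that the argument uses $M$ \emph{real} symmetric rather than merely complex symmetric; this is harmless for the graph-theoretic matrices listed in Proposition \ref{prop:ACmatrices}, but worth flagging. For the sharpness claim the only thing one actually has to check is that $2$ (and, in the even case, $-2$) do not coincidentally appear as an eigenvalue of any of the paired blocks for $C_k$; this follows from the explicit formula $2\cos(2\pi j/k)$ and a direct parity argument.
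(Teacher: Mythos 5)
Your proof is correct, and for parts \ref{item1} and \ref{item2} it is essentially the paper's argument: both rest on the identity $M_{k-\ell}=M_\ell^T$ coming from the block-circulant structure plus $M^T=M$, and on pairing $B_j$ with $B_{k-j}$ so that only $B_0$ (and $B_{k/2}$ for even $k$) can contribute simple eigenvalues. The paper gets the pairing in one line, $B_{k-j}^T=B_j$, and then uses the fact that transposition preserves the spectrum; your detour through ``$B_j$ is Hermitian, hence has real eigenvalues, hence $\sigma(B_{k-j})=\overline{\sigma(B_j)}=\sigma(B_j)$'' reaches the same conclusion but genuinely needs $M$ to be real, a restriction the transpose argument avoids (the corollary only assumes $M$ symmetric, and automorphism compatible matrices are allowed to be complex). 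You correctly flag this yourself; if you replace the conjugation step by the direct computation $B_{k-j}^T=\sum_\ell\omega^{-j\ell}M_\ell^T=\sum_m\omega^{jm}M_m=B_j$, the restriction disappears. Where you genuinely diverge is part \ref{item3}: the paper uses the $k$-sun on $2k$ vertices (so $r=2$) and must compute resultants of the characteristic polynomials $t^2-2\cos(2\pi j/k)\,t-1$ to verify that the eigenvalues of $B_0$ (and $B_{k/2}$) do not recur in the other blocks; you use the cycle $C_k$ with its rotation, so $r=1$, each $B_j$ is the scalar $2\cos(2\pi j/k)$, and sharpness reduces to the observation that $2\cos(2\pi j/k)$ equals $2$ only for $j=0$ and $-2$ only for $j=k/2$. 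Your family is simpler and eliminates the resultant computation entirely; the paper's family has the mild advantage of exhibiting sharpness with $r>1$, showing the bound is not only attained in the degenerate one-orbit case.
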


\begin{remark}
Statements \ref{item1} and \ref{item2} are essentially corollaries of the main result of Petersdorf and Sachs \cite{PetersdorfSachs}, although their result is stated for only the adjacency matrix of a graph.  This result is also given as Theorem 5.7 in \cite{Cvet95}, p. 140, which is the following theorem.
\end{remark}

\begin{thm}[Petersdorf, Sachs]
Let $G$ be a multigraph, let $\phi \in \Aut(G)$, and let $\alpha(\phi)$ be the number of odd cycles and $\beta(G)$ be the number of even cycles of $\phi$.  Then $G$ has at most $\alpha(\phi) + 2 \beta(\phi)$ simple eigenvalues.
\end{thm}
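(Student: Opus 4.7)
The plan is to block-diagonalize $A = A(G)$ through the permutation matrix $P_\phi$ of $\phi$, extending the mechanism behind Theorem~\ref{thm:1}. In the uniform case that theorem uses the discrete Fourier matrix $S$ to place $A$ in block-circulant form; when $\phi$ has orbits of different sizes $k_1, \ldots, k_s$ one cannot produce a single block-circulant form, but because $A$ is automorphism compatible (an observation that extends verbatim from Proposition~\ref{prop:ACmatrices} to multigraphs) the commutation $P_\phi A = A P_\phi$ still permits a joint eigenspace decomposition.

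First I would note that since $A$ and $P_\phi$ commute, each eigenspace $E_\lambda$ of $P_\phi$ is $A$-invariant and
\[
\sigma(A) = \bigcup_{\lambda \in \sigma(P_\phi)} \sigma\bigl(A|_{E_\lambda}\bigr)
\]
as a multiset. Second, restricted to an orbit of length $k_l$ the permutation $P_\phi$ acts as a cyclic shift whose spectrum is the set of $k_l$-th roots of unity, each simple. Summing over the $s = \alpha(\phi) + \beta(\phi)$ orbits gives
\[
\dim E_\lambda = \#\{\,l : \lambda^{k_l} = 1\,\},
\]
so in particular $\dim E_1 = \alpha(\phi) + \beta(\phi)$, while $\dim E_{-1} = \beta(\phi)$ since only the even-length orbits have $-1$ in their cyclic spectrum.

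Third, for $\lambda \notin \{\pm 1\}$ I would pair $E_\lambda$ with $E_{\bar\lambda}$ by complex conjugation: because $P_\phi$ and $A$ are real, the antilinear map $v \mapsto \bar v$ sends $E_\lambda$ bijectively to $E_{\bar\lambda}$ and intertwines $A|_{E_\lambda}$ with $A|_{E_{\bar\lambda}}$. Since $A$ is real symmetric its eigenvalues are real, and the intertwining then forces $\sigma(A|_{E_\lambda}) = \sigma(A|_{E_{\bar\lambda}})$ as multisets; hence every eigenvalue contributed by such a non-real pair has multiplicity at least two in $\sigma(A)$. Consequently the simple eigenvalues of $A$ can only be drawn from $\sigma(A|_{E_1}) \cup \sigma(A|_{E_{-1}})$, and a dimension count produces
\[
\dim E_1 + \dim E_{-1} = (\alpha(\phi)+\beta(\phi)) + \beta(\phi) = \alpha(\phi) + 2\beta(\phi),
\]
which is the claimed bound.

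The most delicate step is the third one: one must verify that the antilinear identification $E_\lambda \to E_{\bar\lambda}$ really equates the multiset spectra of $A|_{E_\lambda}$ and $A|_{E_{\bar\lambda}}$ so that multiplicities genuinely double, rather than merely giving a set-level bijection. Once that pairing is secured the dimension bookkeeping and final count are routine; moreover, the argument depends on $A$ only through automorphism compatibility and real symmetry, so the same bound applies to the Laplacian, signless Laplacian, distance matrix, and the other operators listed in Proposition~\ref{prop:ACmatrices}, strengthening Corollary~\ref{cor:simpleevals} from uniform to arbitrary automorphisms.
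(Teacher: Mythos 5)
Your proof is correct, but note that the paper does not actually prove this theorem: it is quoted from Petersdorf--Sachs (via \cite{Cvet95}) as background, and the paper only proves the special case for uniform automorphisms (Corollary \ref{cor:simpleevals}) by passing to the explicit block-circulant form of Lemma \ref{lem:1} and observing that $M$ symmetric forces $B_{k-j}^{T}=B_{j}$, so that simple eigenvalues can only come from $B_{0}$ and $B_{k/2}$. Your argument is the coordinate-free generalization of that mechanism: the discrete Fourier similarity $S$ of Lemma \ref{lem:1} is exactly a diagonalization of $P_{\phi}$ in the uniform case, the blocks $B_{j}$ are the restrictions $A|_{E_{\omega^{j}}}$, and the paper's identity $\sigma(B_{j})=\sigma(B_{k-j})$ is your conjugation pairing $E_{\lambda}\leftrightarrow E_{\bar\lambda}$. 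By working with the eigenspaces of $P_{\phi}$ directly you avoid needing a single circulant normal form, which is what lets you handle orbits of mixed lengths and recover the full $\alpha(\phi)+2\beta(\phi)$ bound. The step you flag as delicate is sound: since $A$ is real symmetric its eigenvalues are real, so $v\mapsto\bar v$ carries the $\mu$-eigenspace of $A|_{E_{\lambda}}$ onto the $\mu$-eigenspace of $A|_{E_{\bar\lambda}}$, and an antilinear bijection preserves complex dimension, so multiplicities genuinely match and every eigenvalue supported on a non-real $E_{\lambda}$ occurs at least twice in $\sigma(A)$. The dimension count $\dim E_{1}+\dim E_{-1}=(\alpha(\phi)+\beta(\phi))+\beta(\phi)$ then gives the bound, and, as you say, the argument uses only automorphism compatibility and symmetry of $M$, so it extends the theorem to all the matrices of Proposition \ref{prop:ACmatrices} --- precisely the extension the paper asserts without proof just after stating this theorem.
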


If this result is extended to automorphism compatible matrices, parts \ref{item1} and \ref{item2} of Corollary \ref{cor:simpleevals} are both direct consequences. Here, we give a straightforward proof of Corollary \ref{cor:simpleevals}.

\begin{proof}
Since $M$ is symmetric with block circulant structure, we have $(M_{k-j})^T = M_j$ from which it follows that $B_{k-j}^T = B_j$. Then the eigenvalues of $B_j$ and $B_{k-j}$ are identical for each $j = 1, \ldots, k-1$.  For $k$ odd, the only possible simple eigenvalues are those of $B_0$ and for $k$ even, the only possible simple eigenvalues are those of $B_0$ and $B_{k/2}$.

To prove part \ref{item3}, consider the $k$-sun shown in Figure \ref{fig:ksun}, the simple graph on $n = 2k$ vertices with edge set
\[
 \{\{1,2\}, \{3,4\}, \ldots, \{2k-1,2k\},\{2,4\}, \{4,6\}, \ldots, \{2k-2, 2k\}, \{2k,2\}\}.
 \]

\begin{figure}
\begin{center}

\begin{overpic}[scale=.45]{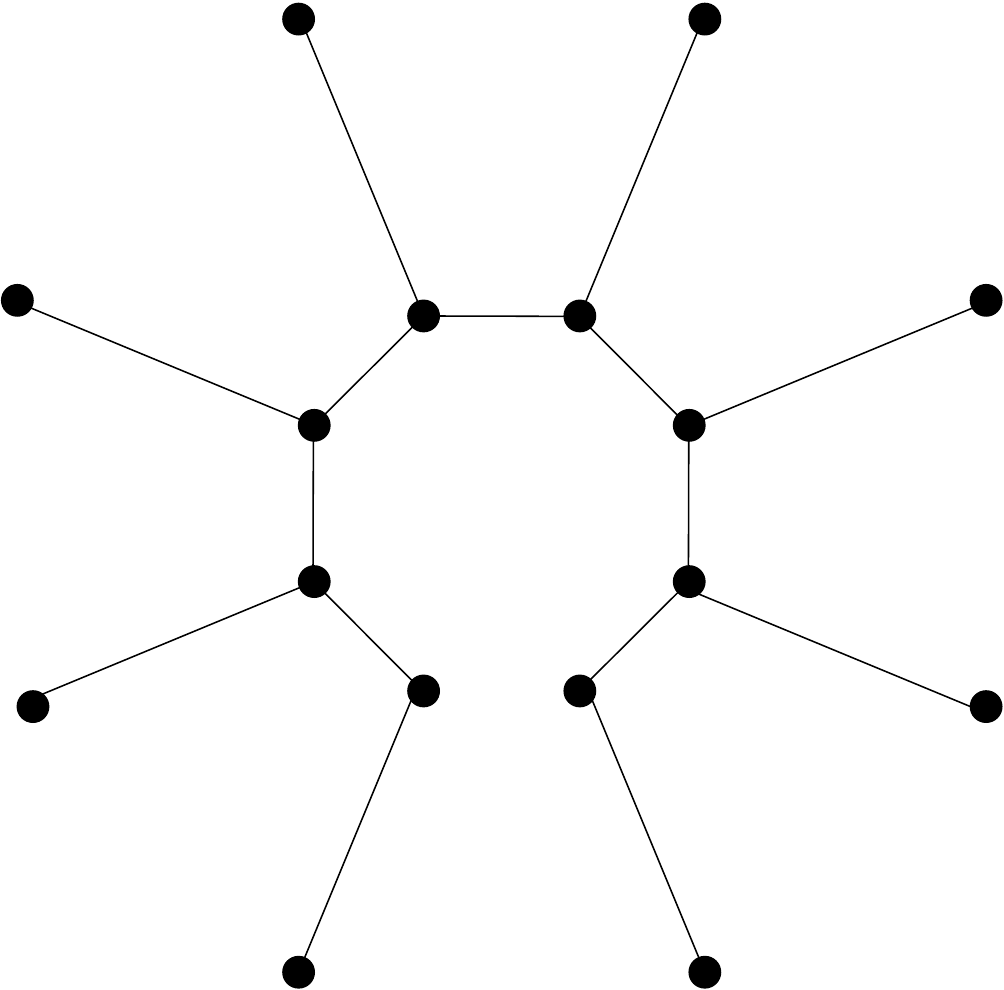}
    \put(77,94){\tiny$1$}
    \put(8,94){\tiny$2k-1$}
    \put(62,65){\tiny$2$}
    \put(31,65){\tiny$2k$}
    \put(103,67){\tiny$3$}
    \put(-19,67){\tiny$2k-3$}
    \put(73,52){\tiny$4$}
    \put(10,52){\tiny$2k-2$}
    \put(103,25){\tiny$5$}
    \put(-19,25){\tiny$2k-5$}
    \put(73,40){\tiny$6$}
    \put(10,40){\tiny$2k-4$}
    \put(77,-2){\tiny$7$}
    \put(8,-2){\tiny$2k-7$}
    \put(62,27){\tiny$8$}
    \put(21,27){\tiny$2k-6$}
    \put(44,29){\large{$\dots$}}
    \end{overpic}\end{center}
\caption{{The $k$-sun on $n = 2k$ vertices is shown, which has the automorphism $\phi= (1,3,5, \ldots, 2k-1)(2,4,6, \ldots, 2k)$.}}\label{fig:ksun}
\end{figure}

Let $\phi$ be the automorphism
\[
\phi = (1,3,5, \ldots, 2k-1)(2,4,6, \ldots, 2k)
\]
so that $r = 2$.  Applying Theorem \ref{thm:1} to the adjacency matrix $A$ of the $k$-sun, we have
\[
B_j = \left[\begin{array}{cc}
0 & 1 \\ 1 & 2\cos(2 \pi j /k)\end{array}\right], \ j = 0, 1, \ldots, k-1,
\]
with characteristic polynomials $p_j(t) = p_{B_j}(t) = t^2 - 2\cos(2\pi j / k)t-1$.
In particular the characteristic polynomial of $B_0$ is $p_0(t) = t^2 - 2t -1$.
\vspace{.2in}

\noindent \textbf{ Case 1:} $k$ is odd\\
We know that the eigenvalues of $B_j$, $j = 1, \ldots, k-1$ are multiple eigenvalues of $A$.  Calculating the resultant of $p_0(t)$ and $p_j(t)$, $1 \leq j \leq k-1$, we have
\[
\text{Res}(p_0, p_j) = \left[ \begin{array}{rrcc}
1 & 0 & 1 & 0 \\
-2 & 1 & -2 \cos(2\pi j /k) &1 \\
-1 & -2 & -1 & -2\cos(2 \pi j / k ) \\
0 & -1 & 0 & -1
\end{array}\right] = -4 [1 - \cos(2\pi j /k)]^2 < 0.
\]

\noindent {Hence,} $p_0$ and $p_j$ have no roots in common, which implies that the $r=2$ roots of $p_0$ are simple eigenvalues of $A$. Therefore, part \ref{item3} of Corollary \ref{cor:simpleevals} is sharp in this case.
\vspace{.2in}

\noindent \textbf{Case 2: } $k$ is even\\
We know that the only possible simple eigenvalues of $A$ are those of $B_0$ and $B_{k/2}$. Here, the respective characteristic polynomials are
\[
p_0(t) = t^2 - 2t -1 \text{ and } p_{k/2}(t) = t^2 + 2t-1.
\]
The resultant found in the case that $k$ \emph{is odd} again verifies that the roots of $p_0(t)$ are distinct from those of all the other $p_j(t)$, and a similar calculation verifies that the roots of $p_{k/2}(t)$ are distinct from those of $p_j(t)$ for $j \neq 0, k/2$.  It {then} follows that $A$ has $2r =4$ simple eigenvalues and that the bounds in parts \ref{item1} and \ref{item2} of Corollary \ref{cor:simpleevals} are sharp for the adjacency matrix of the $k$-sun, for every $k$.
\end{proof}

It is worth noting that Corollary \ref{cor:simpleevals} may be sharp for one automorphism compatible matrix associated with a graph and not for another. For example, note that the graph in Example \ref{ex:1} has the automorphism $\phi = (1, 3, 5, 7)(2, 4, 6, 8)$, for which $r = 8/4 = 2$. Corollary \ref{cor:simpleevals} guarantees that any automorphism compatible matrix can have at most four simple eigenvalues. In this case the the graph's Laplacian matrix does have four simple eigenvalues, but its adjacency matrix only has two.

In the following example we show that Corollary \ref{cor:simpleevals} may not hold if $M$ is not symmetric.

\begin{example}\label{ex:directed}
{Consider the directed graph $K$ shown in Figure \ref{fig:ladder}, which resembles a ladder graph but does not have a symmetric adjacency matrix $A=A(K)$. The graph has $n=2k$ vertices for any $k>1$ and, as can be seen, $\phi=(1,3,5,\dots, 2k-1)(2,4,6,\dots, 2k)$ is an automorphism of $K$, so that $r=n/k=2$.}

\begin{figure}
%
%
%
    \begin{center}
   \begin{tikzpicture}[line cap=round,line join=round,x=1.0cm,y=1.0cm,scale = 1]
\coordinate (x1)  at (0,1);
\coordinate (x2)  at (0,-1);
\coordinate (x3) at (1.5,1);
 \coordinate (x4)at (1.5,-1) ;
 \coordinate (x5) at (3,1);
 \coordinate (x6) at (3,-1);
 \coordinate (x5p) at (4.5,1);
 \coordinate (x6p) at (4.5,-1);
 \coordinate (x7m) at (5.2,1);
 \coordinate (x8m) at (5.2,-1);
 \coordinate (x7) at (6.7,1);
 \coordinate (x8) at (6.7,-1);
 \coordinate (xa) at (-.1,1.5);
 \coordinate (xb) at (6.7,1.5);
 \coordinate (xc) at (-.1,-1.5);
 \coordinate (xd) at (6.7,-1.5);

\draw [->](x1) -- ($(x3)+(-.1,0)$)  ;
\draw [->] (x3) -- ($(x5)+(-.1,0)$) ;
\draw [->] (x5)--($(x5p)+(-.1,0)$) ;
\draw [->](x2) -- ($(x4)+(-.1,0)$) ;
\draw [->](x4) -- ($(x6)+(-.1,0)$) ;
\draw [->] (x6)--($(x6p)+(-.1,0)$) ;
\draw [->](x7m)--($(x7)+(-.1,0)$);
\draw [->](x8m) -- ($(x8)+(-.1,0)$) ;
\draw [->](x2) -- ($(x1)+(0,-.1)$) ;
\draw [->](x4) -- ($(x3)+(0,-.1)$) ;
\draw [->](x6) -- ($(x5)+(0,-.1)$) ;
\draw [->](x8) -- ($(x7)+(0,-.1)$) ;

\draw (xb) -- (xa)  ;
\draw (xd) -- (xc)  ;

\draw[->] (xa) arc (90:270:.25);
\draw[->] (xc) arc (270:90:.25);
\draw (xd) arc (270:450:.25);
\draw (xb) arc (450:270:.25);

\begin{scriptsize}
\draw [fill=black] (x1) circle (2.5pt);
\draw [fill=black] (x2) circle (2.5pt);
\draw [fill=black] (x3) circle (2.5pt);
\draw [fill=black] (x4) circle (2.5pt);
\draw [fill=black] (x5) circle (2.5pt);
\draw [fill=black] (x6) circle (2.5pt);
\draw [fill=black] (x7) circle (2.5pt);
\draw [fill=black] (x8) circle (2.5pt);
\node at ($(x1)+(0,.25)$) {$1$};
\node at ($(x2)+(0,-.25)$) {$2$};
\node at ($(x3)+(0,.25)$) {$3$};
\node at ($(x4)+(0,-.25)$) {$4$};
\node at ($(x5)+(0,.25)$) {$5$};
\node at ($(x6)+(0,-.25)$) {$6$};
\node at ($(x7)+(0,.25)$) {$7$};
\node at ($(x8)+(0,-.25)$) {$8$};
\node at (4.8,1) {$\ldots$};
\node at (4.8,-1) {$\ldots$};
\node at (3.75,-2) {$K$};
\end{scriptsize}
\end{tikzpicture}
\end{center}
\caption{{The directed graph $K$, considered in example \ref{ex:directed}, on $2k$ vertives with automorphism $\phi=(1,3,5,\dots,2k-1)(2,4,6,\dots,2k)$ is shown. If $k$ is odd then the adjacency matrix of $A=A(K)$ has $2k$ simple eigenvalues. If $k$ is even then $A$ has $2k-2$ simple eigenvalues.}}\label{fig:ladder}
\end{figure}
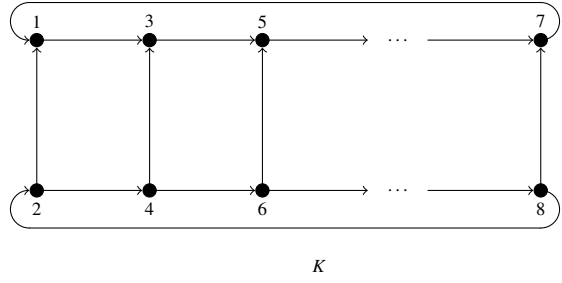

{Using theorem \ref{thm:1} on $A$, the matrix $B_j$ corresponding to the automorphism $\phi$ is
\[
B_j=
\left[\begin{array}{cc}
\omega^j&1\\
1&\omega^j
\end{array}\right] \ \ \text{for} \ \ j=0,\dots,k-1
\]
where $\omega=e^{2\pi i/k}$. Since $\sigma(B_j)
=\{\pm1+\omega^j\}$ then the matrix $A$ has eigenvalues

\[
\sigma(A)=\{\pm1+e^{2\pi i j/k}: j=0,1,\dots,k-1\}.
\]
Hence, if $k$ is odd, then $2k$ eigenvalues of $A$ are simple. If $k$ is even, all eigenvalues of $L$ are simple except $0$, which has multiplicity 2.}

Note that the graph $K$ has $n=2k$ vertices implying $r=n/k=2$ for the automorphism $\phi$.
For $k >2$, there are greater than $r$ simple eigenvalues, demonstrating that the conclusion in  Corollary \ref{cor:simpleevals} need not hold if $M$ is not symmetric.
\end{example}

\section{Basic Automorphisms}\label{sec:basic}

In this section we extend the results of Lemma \ref{lem:1} and Theorem \ref{thm:1} to include a slightly broader class of automorphisms which we will call basic. A \emph{basic automorphism} is a generalization of a uniform automorphism, in the sense that it contains orbits of some uniform size $k>1$ and may also contain orbits of size 1.

To extend our results of the previous section to basic automorphisms we require the following lemma, which is a generalization of Lemma \ref{lem:1}.

\begin{lem}\label{lem:2}
Let $C$ be the block matrix
\begin{equation}\label{eq:circulant}
C = \left[\begin{array}{llllll}
F & H & H & H & \cdots & H \\
L & C_0 & C_1 & C_2 & \cdots & C_{k-1} \\
L & C_{k-1} & C_0 & C_1 & \cdots & C_{k-2} \\
L & C_{k-2} & C_{k-1} & C_0 & \cdots & C_{k-3} \\
\vdots & \vdots & \vdots & \vdots & & \vdots \\
L & C_1 & C_2 & C_3 & \cdots & C_0 \\
\end{array}\right],
\end{equation}
where $F$ is $p \times p$, $H$ is $p \times r$, $L$ is $r \times p$, and each $C_j$ is $r \times r$.  Let $T = I_p \oplus S$ where $S$ is the matrix given in the statement of Lemma \ref{lem:1}.  Then
\[
T^{-1} C T = \left[\begin{array}{rr} F & kH \\ L & B_0 \end{array}\right]
\oplus B_1 \oplus B_2 \oplus \cdots \oplus B_{k-1},
\]
where
\[
B_j = \sum_{m=0}^{k-1} \omega^{jm}C_m \ \ \text{for} \ \ j = 0, 1, \ldots, k-1.
\]
Consequently,
\[
\sigma(C) = \sigma\left(\left[\begin{array}{rr} F & kH \\ L & B_0 \end{array}\right] \right)
\cup \sigma(B_1) \cup \sigma(B_2) \cup \cdots \cup \sigma(B_{k-1}).
\]
\end{lem}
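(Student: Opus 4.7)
The plan is to exploit the block structure of $T = I_p \oplus S$ and reduce the problem to three independent computations: the $(1,1)$ block, the $(2,2)$ block, and the off-diagonal blocks. Writing $C$ conformally as
\[
C = \begin{bmatrix} F & \widetilde H \\ \widetilde L & C_{\mathrm{circ}} \end{bmatrix},
\]
where $\widetilde H = [H \ H \ \cdots \ H]$ is $p \times kr$, $\widetilde L = [L^T \ L^T \ \cdots \ L^T]^T$ is $kr \times p$, and $C_{\mathrm{circ}}$ is the $kr \times kr$ block-circulant bottom-right corner, conjugation by $T = I_p \oplus S$ gives
\[
T^{-1} C T = \begin{bmatrix} F & \widetilde H \, S \\ S^{-1} \widetilde L & S^{-1} C_{\mathrm{circ}} S \end{bmatrix}.
\]
The $(1,1)$ block is trivially $F$. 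The $(2,2)$ block is exactly what Lemma~\ref{lem:1} handles: $S^{-1} C_{\mathrm{circ}} S = B_0 \oplus B_1 \oplus \cdots \oplus B_{k-1}$. So the entire task reduces to showing that the off-diagonal blocks have the sparse form that couples only to the $B_0$ summand.

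The key observation for the off-diagonal blocks is the standard orthogonality of roots of unity: $\sum_{m=0}^{k-1} \omega^{jm}$ equals $k$ when $j = 0$ and $0$ otherwise. The $j$th block column of $S$ has blocks $\omega^{(j-1)m} I$ for $m = 0, \ldots, k-1$, so the $j$th block of $\widetilde H\, S$ is $H \sum_{m} \omega^{(j-1)m}$, which collapses to $kH$ when $j=1$ and to $0$ otherwise. A symmetric computation using $S^{-1} = (1/k)S^*$ shows that $S^{-1}\widetilde L$ has first block equal to $L$ and all remaining blocks zero.

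Plugging these back in yields
\[
T^{-1}CT = \begin{bmatrix} F & kH & 0 & \cdots & 0 \\ L & B_0 & 0 & \cdots & 0 \\ 0 & 0 & B_1 & & \\ \vdots & \vdots & & \ddots & \\ 0 & 0 & & & B_{k-1} \end{bmatrix},
\]
which is precisely the block direct sum in the statement. The spectral equality then follows immediately from similarity.

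I do not expect a genuine obstacle here: the proof is essentially a block multiplication in which the nontrivial work — diagonalizing the circulant part — is already done in Lemma~\ref{lem:1}, and the mild new ingredient is just the root-of-unity cancellation that isolates the $B_0$ block as the unique one coupled to $F$ through the off-diagonal entries. The only place to be careful is conformal partitioning and index bookkeeping when writing $\widetilde H\, S$ and $S^{-1} \widetilde L$, so that the factors of $k$ and the placement of the single nonzero block end up consistent with the asserted $\begin{bmatrix} F & kH \\ L & B_0 \end{bmatrix}$ summand.
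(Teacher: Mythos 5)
Your proposal is correct and follows essentially the same route as the paper: conjugate by $T = I_p \oplus S$ in conformal block form, apply Lemma \ref{lem:1} to the circulant corner, and use the root-of-unity sum $\sum_{m}\omega^{(j-1)m}$ to show the off-diagonal blocks collapse to $[kH\ 0\ \cdots\ 0]$ and $[L^T\ 0\ \cdots\ 0]^T$. The bookkeeping you flag (the factor of $k$ appearing in $kH$ but cancelling in $S^{-1}\widetilde L$ to leave $L$) comes out exactly as in the paper's computation.
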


\begin{proof}
Let $R$ be the matrix in the statement of Lemma \ref{lem:1}.  Then with $P = \left[ H \quad H \cdots H\right]$ and
$Q = [ L^T \quad L^T \cdots L^T]^T$,
\[
T^{-1}CT = \left[\begin{array}{rr} I_p & 0 \\ 0 & (1/k) S^*\end{array}\right]
\left[\begin{array}{rr} F & P \\ Q & R \end{array}\right]
\left[\begin{array}{rr} I_p & 0 \\ 0 & S\end{array}\right]
=\left[\begin{array}{rr} F & PS \\ (1/k)S^*Q & (1/k) S^* R S \end{array}\right].
\]
Since $PS = [kH \quad 0 \quad \cdots \quad 0 ]$ and
$S^*Q = [kL^T \ 0 \ \cdots \ 0 ]^T$, we have by Lemma \ref{lem:1}
\[
T^{-1}CT = \left[\begin{array}{rrrrrrr}
F & kH & 0 & 0 & \cdots & \cdots & 0 \\
L & B_0 & 0 & 0 & \cdots & \cdots & 0 \\
0 & 0 & B_1 & 0 & \cdots & \cdots & 0 \\
0 & 0 & 0 & B_2  & 0  & \cdots & 0 \\
\vdots & \vdots & \vdots & 0 & \ddots & \ddots & \vdots \\
\vdots & \vdots & \vdots & \vdots & \ddots & \ddots & \vdots \\
0 & 0 & 0 &0  & \cdots  & 0 &B_{k-1} \\
\end{array}\right],
\]
and the result follows.
\end{proof}

In order to state an analogue of Theorem \ref{thm:1} we need to extend definitions \ref{def:UniAut} and \ref{def:transversal}.

\begin{defn} \textbf{(Basic Automorphism)}
If $\phi$ is an automorphism of a graph $G$ with orbits of size $k >1$ or 1, we call $\phi$ a \textit{basic automorphism} of $G$ with orbit size $k$. The vertices with orbit size 1 are said to be \emph{fixed} by $\phi$.
\end{defn}

\begin{defn}\label{def:semitrans}
Given a basic automorphism with orbit size $k$, choose one vertex from each orbit of size $k$, and let $\cT_0$ be the set of these chosen vertices.  We call $\cT_0$ a \textit{semi-transversal} of the orbits of $\phi$.  We define the powers $\cT_m$ as before.
\end{defn}

The reason $\cT_0$ is not a transversal in Definition \ref{def:semitrans} is that we do not include vertices from those orbits of $\phi$ of size one. The notions of a basic automorphism and semi-transversal allow us to extend the result of Theorem \ref{thm:1} as follows.

\begin{thm}\label{thm:2} \textbf{(Basic Equitable Decomposition)}
Let $G$ be a graph on $n$ vertices, let $\phi$ be a basic automorphism of $G$ of size $k>1$, let $\cT_0$ be a semi-transversal of the $k$-orbits of $\phi$, let $\cT_f$ be the vertices fixed by $\phi$, let $p = |\cT_f|$, and let $M$ be an automorphism compatible matrix on $G$.  Set $F = M[\cT_f,\cT_f]$, $H = M[\cT_f,\cT_0]$, $L=M[\cT_0,\cT_f]$, $M_m = M[\cT_0, \cT_m]$, for $m = 0, 1, \ldots, k-1$, $\omega = e^{2 \pi i /k}$, and
\[
B_j = \sum_{m=0}^{k-1} \omega^{jm} M_m,  \ \ j = 0, 1, \ldots, k-1.
\]
Then
\begin{equation}\label{eq:spectrum2}
\sigma(M) = \sigma\left(\left[\begin{array}{rr} F & kH \\ L & B_0 \end{array}\right] \right)
\cup \sigma(B_1) \cup \sigma(B_2) \cup \cdots \cup \sigma(B_{k-1}),\end{equation}
and $\left[\begin{array}{rr} F & kH \\ L & B_0 \end{array}\right] $ is the divisor matrix of $M$ in the equitable partition associated with $\phi$.
\end{thm}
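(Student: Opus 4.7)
The plan is to mirror the proof of Theorem \ref{thm:1}, reducing to Lemma \ref{lem:2} in place of Lemma \ref{lem:1}. First I would order the vertices of $G$ as $\cT_f, \cT_0, \cT_1, \ldots, \cT_{k-1}$ and examine the block structure of $M$ under this ordering. Because $\phi$ fixes every vertex of $\cT_f$, automorphism compatibility gives $M[\cT_f, \cT_m] = M[\phi(\cT_f), \phi(\cT_m)] = M[\cT_f, \cT_{m+1}]$ for each $m$ (indices taken modulo $k$), so all these blocks equal $H = M[\cT_f, \cT_0]$. Symmetrically, $M[\cT_m, \cT_f] = L$ for every $m$. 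The lower-right $k \times k$ pattern $M[\cT_s, \cT_t] = M_{t-s}$ is block circulant, by the same argument as in the proof of Theorem \ref{thm:1}. Thus $M$ takes the form displayed in (\ref{eq:circulant}), and Lemma \ref{lem:2} immediately yields (\ref{eq:spectrum2}).

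It remains to verify that $\left[\begin{array}{rr} F & kH \\ L & B_0 \end{array}\right]$ is the divisor matrix of $M$ for the equitable partition whose parts are the orbits of $\phi$---the $p$ singleton orbits in $\cT_f$ together with the $r = (n-p)/k$ orbits of size $k$. For a row indexed by a fixed vertex $v \in \cT_f$, the entry for another singleton $\{v'\}$ is simply $m_{vv'} = F_{vv'}$, while the entry for the $k$-orbit generated by $w \in \cT_0$ is $\sum_{\ell=0}^{k-1} m_{v,\phi^\ell(w)}$. Using $\phi(v) = v$ together with automorphism compatibility, each summand equals $m_{vw}$, yielding $k H_{vw}$ and producing the top block row $[F \ \ kH]$. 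For a row indexed by the orbit of $u \in \cT_0$ (taking $u$ as the representative), the entry for $\{v'\}$ is $m_{uv'} = L_{uv'}$, and the entry for the $k$-orbit of $w \in \cT_0$ is $\sum_{\ell=0}^{k-1} m_{u,\phi^\ell(w)} = \sum_{\ell=0}^{k-1}(M_\ell)_{uw} = (B_0)_{uw}$, producing the bottom block row $[L \ \ B_0]$.

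The only real obstacle is bookkeeping: one must confirm that $F$, $H$, $L$, and $M_m$ are well defined independently of the representatives chosen in $\cT_f$ and $\cT_0$, and that the sums defining the divisor matrix are genuinely constant as the representative of each orbit varies. Both facts follow directly from automorphism compatibility together with the observation that $\phi^\ell$ cyclically permutes the $k$-orbits while fixing $\cT_f$ pointwise, so no new idea beyond Theorem \ref{thm:1} is required; the substantive work has already been absorbed into Lemma \ref{lem:2}.
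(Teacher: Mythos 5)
Your proposal is correct and follows essentially the same route as the paper's proof: order the vertices as $\cT_f, \cT_0, \ldots, \cT_{k-1}$, use automorphism compatibility together with the fact that $\phi$ fixes $\cT_f$ pointwise to put $M$ in the form \eqref{eq:circulant}, invoke Lemma \ref{lem:2}, and then verify the divisor-matrix claim by the same four-case entrywise computation. No gaps.
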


\begin{proof}
We may assume that the vertices of $G$ and rows and columns of $M$ are labeled in the order $\cT_f, \cT_0, \cT_1, \ldots, \cT_{k-1}$.  Since the vertices of $\cT_f$ are fixed by $\phi$ and $\phi$ is automorphism compatible,
\[H = M[\cT_f, \cT_0] = M[\phi^m(\cT_f), \phi^m(\cT_0)] = M[\cT_f, \cT_m] \ \ \text{for} \ \ m = 0, 1, \ldots, k-1.
\]
Similarly,
$L = M[\cT_m, \cT_f]$ for $m = 0, 1, \ldots, k-1$.

As in the proof of Theorem \ref{thm:1}, $M[\cT_s, \cT_t] = M[\cT_{s+1},\cT_{t+1}]$ for all $s,t \in \{0, \ldots, k-1\}$. Then $M$ has the form of the matrix in \eqref{eq:circulant}, so \eqref{eq:spectrum2} follows by Lemma \ref{lem:2}.

To prove the last statement, we wish to show that the divisor matrix $M_\phi$ is the same as the ``first'' matrix in the decomposition $\tilde{B} = \left[\begin{array}{rr} F & kH \\ L & B_0\end{array}\right]$. Since the vertices are labeled in the order of fixed points and then semi-transversals, we have the following orbits of vertices,
\[
\begin{array}{ll}
V_1 = \{1\}, \ldots V_p = \{p\}, \ V_{p+1} = \{p+1, p+1+(n-p)/k, \ldots, p+1+(k-1)(n-p)/k\}, \ldots,\hspace{1cm} \\
\hspace*{4cm} V_{p+(n-p)/k} = \{p + (n-p)/k, p+2(n-p)/k, \ldots, n = (p + k(n-p)/k)\}.
\end{array}
\]
Recall that $(M_\phi)_{ij} = \sum_{t \in V_j} m_{st}$ for any $s \in V_i$.  We have four cases to consider.
First,if $V_i$ and $V_j$  are fixed point orbits ($i, j \leq p$) then clearly
\[
(M_\phi)_{ij} =  m_{ij} = (F)_{ij} = (\tilde{B})_{ij}.
\]
If $ i \leq p$ and $j >p$, we have
\[
(M_\phi)_{ij} = \sum_{t \in V_j} m_{it} =\sum_{\ell=0}^{k-1} m_{i\phi^{\ell}(j)}
=\sum_{\ell=0}^{k-1} m_{\phi^{\ell}(i)\phi^{\ell}(j)}  =\sum_{\ell=0}^{k-1} m_{ij} = k (H)_{i,j-p}   = (\tilde{B})_{ij}.
\]
If $ i>  p$ and $j\leq p$, we have
\[
(M_\phi)_{ij} =  m_{ij} = (L)_{i-p,j}  = (\tilde{B})_{ij}.
\]
Finally, if both $i$ and $j$ are greater than $p$, we have
\[
(M_\phi)_{ij} =  \sum_{t \in V_j} m_{it} =\sum_{\ell=0}^{k-1} m_{i\phi^{\ell}(j)}  =
\sum_{\ell=0}^{k-1} M[\mathcal{T}_0, \mathcal{T}_\ell]_{i-p \ j-p}  = (B_0)_{i-p, j-p} = (\tilde{B})_{ij}.
\]

\end{proof}

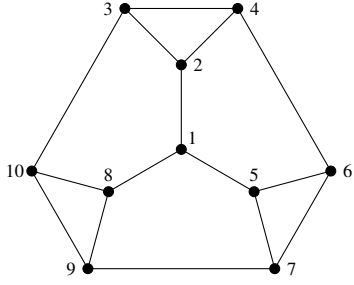
\begin{figure}\label{fig:trigraph}
\begin{center}
\begin{tikzpicture}[line cap=round,line join=round,>=triangle 45,x=1.0cm,y=1.0cm,scale = .75]
\coordinate (x1)  at (0,0);
\coordinate (x2)  at (0,1.5);
\coordinate (x3) at (-1,2.5);
 \coordinate (x4)at (1,2.5) ;
 \coordinate (x5) at (1.29,-.75);
 \coordinate (x6) at (2.65,-.384);
 \coordinate (x7) at (1.656,-2.116);
 \coordinate (x8) at (-1.29,-.75);
 \coordinate (x9) at (-1.656,-2.116);
 \coordinate (x10) at (-2.65,-.384);

\draw (x1) -- (x2) -- (x3) -- (x4) -- (x2);
\draw (x1) -- (x8) -- (x9) -- (x10) -- (x8);
\draw (x1) -- (x5) -- (x6) -- (x7) -- (x5);
\draw (x4) -- (x6) ;
\draw (x3) -- (x10) ;
\draw (x9) -- (x7) ;

\begin{scriptsize}
\draw [fill=black] (x1) circle (2.5pt);
\draw [fill=black] (x2) circle (2.5pt);
\draw [fill=black] (x3) circle (2.5pt);
\draw [fill=black] (x4) circle (2.5pt);
\draw [fill=black] (x5) circle (2.5pt);
\draw [fill=black] (x6) circle (2.5pt);
\draw [fill=black] (x7) circle (2.5pt);
\draw [fill=black] (x8) circle (2.5pt);
\draw [fill=black] (x9) circle (2.5pt);
\draw [fill=black] (x10) circle (2.5pt);
\node at ($(x1)+(.2,.2)$) {$1$};
\node at ($(x2)+(.3,0)$) {$2$};
\node at ($(x3)+(-.3,0)$) {$3$};
\node at ($(x4)+(.3,0)$) {$4$};
\node at ($(x5)+(0,.3)$) {$5$};
\node at ($(x6)+(.3,0)$) {$6$};
\node at ($(x7)+(.3,0)$) {$7$};
\node at ($(x8)+(0,.3)$) {$8$};
\node at ($(x9)+(-.3,0)$) {$9$};
\node at ($(x10)+(-.3,0)$) {$10$};
\end{scriptsize}
\end{tikzpicture}
\end{center}
\caption{The graph $G$ considered in Example \ref{ex:basic} along with the basic automorphism $\phi = (1)(2,5,8)(3,6,9)(4,7,10)$.}
\end{figure}

\begin{example}\label{ex:basic}
Let $G$ be the undirected graph shown in Figure \ref{fig:trigraph} and let $\phi$ be the automorphism given by $\phi = (1)(2,5,8)(3,6,9)(4,7,10)$. The adjacency matrix $A=A(G)$ has the form
\[
A = \left[\begin{array}{rrrr}
F & H  & H & H \\
L & A_0 & A_1 & A_2 \\
L & A_2 & A_0 & A_1 \\
L & A_1 & A_2 & A_0\\
\end{array}\right]
\]
with $F = [0]$, $H = [1 \ 0 \ 0 ]$, $L = H^T$,
\[
A_0 = \left[\begin{array}{rrr} 0 & 1 & 1 \\ 1 & 0 & 1 \\ 1 & 1 & 0 \end{array}\right], \ \
A_1 = \left[\begin{array}{rrr} 0 & 0 & 0 \\ 0 & 0 & 0 \\ 0 & 1 & 0 \end{array}\right], \ \
A_2 = C_1^T.
\]
Then using Theorem \ref{thm:2} we have
\[
B_0 = \left[\begin{array}{rrr} 0 & 1 & 1 \\ 1 & 0 & 2 \\ 1 & 2 & 0 \end{array}\right], \ \
B_1 = \left[\begin{array}{ccc} 0 & 1 & 1 \\ 1 & 0 &1 + \omega^2 \\ 1 & 1 + \omega & 0 \end{array}\right], \ \
B_2 = B_1^T
\]
and
\[
\left[\begin{array}{rr} F & kH \\ L & B_0\end{array}\right]
= \left[\begin{array}{rrrr} 0 & 3 & 0 & 0 \\ 1 & 0 & 1 & 1 \\ 0 & 1 & 0 & 2 \\ 0 & 1 & 2 & 0 \end{array}\right].
\]
The eigenvalues of this last matrix are $\{3, 1, -2, -2\}$ while the eigenvalues of $B_1$ are approximately $\sigma(B_1)\approx \{1.87939, -.347296, -1.53209\}$.  By Theorem \ref{thm:2}, the eigenvalues of $A$ are then approximately
\[
\sigma(A)\approx \{3, 1.87939,1.87939,1, -.347296,-.347296, -1.53209,-1.53209,-2,-2\}.
\]
\end{example}

Since not every automorphism is either uniform or basic it is important to note that, if $G$ has a nonuniform automorphism $\phi$ then some power of $\phi$ is a basic automorphism. That is,  it is always possible given an automorphism $\phi$ of a graph to either use this automorphism to decompose an associated matrix or to find some power of $\phi$ that can be used to decompose this matrix.
This fact can be used recursively to \textit{fully} decompose a matrix, and can be interpreted as fully decomposing a graph (see \cite{BFSW}).

Corollary \ref{cor:simpleevals} can also be extended to basic automorphisms as follows.

\begin{cor}\label{cor:basicevals}
Let $G$ be a graph on $n$ vertices with a basic automorphism $\phi$ having $N$ orbits of size 1 and all other orbits of size $k>1$ and let $M$ be a symmetric automorphism compatible matrix of $G$. Then
\begin{enumerate}
\item\label{item1} If $k$ is odd, $A$ has at most $N+(n-N)/k$ simple eigenvalues.
\item\label{item2} If $k$ is even, $A$ has at most $N+2(n-N)/k$ simple eigenvalues.
\item\label{item3} The {bound in part \ref{item1} is sharp in that it holds for infinitely many $N$ and infinitely many $k$.}
\end{enumerate}
\end{cor}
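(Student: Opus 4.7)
The plan is to mimic the proof of Corollary \ref{cor:simpleevals}, using Theorem \ref{thm:2} in place of Theorem \ref{thm:1}. Applying Theorem \ref{thm:2} gives
\[
\sigma(M) = \sigma(\tilde{B}) \cup \sigma(B_1) \cup \cdots \cup \sigma(B_{k-1}),
\]
where $\tilde{B}$ is the divisor-type matrix of size $N + r$ defined in Theorem \ref{thm:2} and each $B_j$ is $r \times r$, with $r = (n-N)/k$.

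For parts \ref{item1} and \ref{item2}, I would first show $B_j^T = B_{k-j}$ by the same calculation as in Corollary \ref{cor:simpleevals}: symmetry of $M$ combined with the block-circulant structure of the lower-right portion forces $M_m^T = M_{k-m}$, and substituting into $B_j = \sum_m \omega^{jm} M_m$ yields $B_j^T = B_{-j} = B_{k-j}$. Hence $\sigma(B_j) = \sigma(B_{k-j})$. For $k$ odd, no index $j \in \{1, \dots, k-1\}$ is self-paired, so every eigenvalue produced by $B_1, \dots, B_{k-1}$ appears with multiplicity at least $2$ in $\sigma(M)$; the simple eigenvalues of $M$ must therefore come from $\tilde{B}$, of which there are at most $N + r$. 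For $k$ even, the only self-paired index is $j = k/2$ (where $B_{k/2}$ is real symmetric of size $r$), so simple eigenvalues can come from either $\tilde{B}$ or $B_{k/2}$, giving the bound $N + 2r$.

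For part \ref{item3}, I would construct an explicit infinite family $G_{N,k}$ of graphs attaining the odd-$k$ bound $N + r$. Let $G_{N,k}$ be the disjoint union of the $k$-sun from Corollary \ref{cor:simpleevals} with an auxiliary graph $J_N$ on $N$ vertices whose adjacency spectrum consists of $N$ distinct values, none of which is a root of any $p_j(t) = t^2 - 2\cos(2\pi j/k)t - 1$ for $j = 0, 1, \dots, k-1$. Taking $\phi$ to rotate the sun and fix $J_N$ pointwise yields a basic automorphism with $N$ fixed points and $r = 2$. Since $J_N$ is disconnected from the sun, the blocks $H$ and $L$ appearing in Theorem \ref{thm:2} vanish, so $\tilde{B} = A(J_N) \oplus B_0$, whose spectrum consists of $N + 2$ distinct simple eigenvalues, all disjoint from the doubled eigenvalues contributed by $B_1, \dots, B_{k-1}$.

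The main obstacle is producing a suitable $J_N$ for infinitely many $N$ and $k$. I would take $J_N$ to be the path $P_N$, whose spectrum $\{2\cos(j\pi/(N+1)) : j = 1, \dots, N\}$ consists of $N$ simple values, and observe that for each fixed odd $k \ge 3$ the finitely many forbidden values (roots of $p_0, \dots, p_{k-1}$) exclude at most finitely many $N$ from the construction. This supplies sharpness for every odd $k \ge 3$ and infinitely many $N$, completing part \ref{item3}.
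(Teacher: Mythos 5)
Your treatment of parts 1 and 2 is correct and is essentially the paper's argument: symmetry of $M$ together with the block--circulant structure gives $B_j^T=B_{k-j}$, so every eigenvalue coming from $B_1,\dots,B_{k-1}$ (excluding $B_{k/2}$ when $k$ is even) has multiplicity at least two, and the simple eigenvalues are confined to the divisor matrix $\widetilde{B}$ of size $N+r$ (plus $B_{k/2}$ of size $r$ when $k$ is even), where $r=(n-N)/k$. For part 3, however, you diverge from the paper, and your argument has a concrete gap. The paper does not use a disjoint union at all: it takes the ``broom'' tree of Figure \ref{fig:tree} (a path $P_N$ with $n-N$ pendant leaves attached at vertex $N$), so that there is a \emph{single} orbit of size $k=n-N$ and $r=1$, and it proves sharpness by a rank argument ($\rank A=N+1$ and every nonzero eigenvalue is simple), which completely sidesteps the question of whether eigenvalues of different blocks collide. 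Your construction forces you to confront exactly that question, and this is where the proposal breaks down.

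The false step is the claim that the finitely many forbidden values (the roots of $p_0,\dots,p_{k-1}$) ``exclude at most finitely many $N$.'' A single value $v=2\cos(a\pi/d)$ with $a/d$ in lowest terms is an eigenvalue of $P_N$ for \emph{every} $N$ with $d\mid N+1$, an infinite arithmetic progression. Such collisions really occur: for $k=3$ one has $p_1(t)=t^2+t-1$, whose roots are $2\cos(2\pi/5)$ and $2\cos(4\pi/5)$, and these lie in $\sigma(P_N)$ whenever $5\mid N+1$; for those $N$ your graph has only $N$ simple eigenvalues rather than $N+2$. The repair is not hard but must be made explicitly: each root of $p_j$ is either not of the form $2\cos(q\pi)$ with $q\in\Q$, in which case it is never an eigenvalue of any path (this covers $1\pm\sqrt{2}$, since $1+\sqrt{2}>2$ while every Galois conjugate of a path eigenvalue lies in $(-2,2)$), or else it excludes only a progression $\{N: d\mid N+1\}$ with $d\geq 2$; the complement of finitely many such progressions is infinite (e.g.\ take $N+1$ a large prime), and you also need the resultant computation from Corollary \ref{cor:simpleevals} to separate $\sigma(B_0)$ from $\sigma(B_j)$, $j\geq 1$. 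With those additions your construction does establish part 3 for every odd $k\geq 3$ and infinitely many $N$, but as written the key quantifier is wrong.
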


The proof of Corollary \ref{cor:basicevals} is similar to that of Corollary \ref{cor:simpleevals}.  Again, the simple eigenvalues can only come from the matrices $M_\phi $   (and $B_{k/2}$ if $k$ is even) in the matrix decomposition.

{The following examples verifies statement \ref{item3}. (It is currently unknown if the bound in \ref{item2} is sharp for infinitely many $N$ and $k$.)}
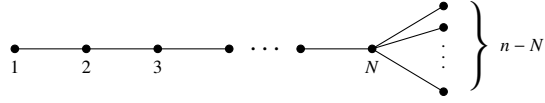
\begin{figure}[h]
    \begin{center}
   \begin{tikzpicture}[line cap=round,line join=round,x=1.0cm,y=1.0cm,scale = .95]
\coordinate (x1)  at (-3,0);
\coordinate (x2)  at (-2,0);
\coordinate (x3) at (-1,0);
 \coordinate (x4)at (0,0) ;
 \coordinate (x5) at (1,0);
 \coordinate (x6) at (2,0);
 \coordinate (x7) at (3,.6);
 \coordinate (x8) at (3,.3);
 \coordinate (x9) at (3,-.6);

\draw (x1) -- (x2) --(x3) -- (x4) ;
\draw  (x5) -- (x6) --(x7);
\draw  (x6)--(x8) ;
\draw (x6) -- (x9) ;

\begin{scriptsize}
\draw [fill=black] (x1) circle (1.5pt);
\draw [fill=black] (x2) circle (1.5pt);
\draw [fill=black] (x3) circle (1.5pt);
\draw [fill=black] (x4) circle (1.5pt);
\draw [fill=black] (x5) circle (1.5pt);
\draw [fill=black] (x6) circle (1.5pt);
\draw [fill=black] (x7) circle (1.5pt);
\draw [fill=black] (x8) circle (1.5pt);
\draw [fill=black] (x9) circle (1.5pt);
\node at ($(x1)+(0,-.25)$) {$1$};
\node at ($(x2)+(0,-.25)$) {$2$};
\node at ($(x3)+(0,-.25)$) {$3$};
\node[scale=1.5] at ($(x4)+(.5,0)$) {$\ldots$};
\node at ($(x6)+(0,-.25)$) {$N$};
\node at ($(x8)+(0,-.3)$) {$\vdots$};
\node[scale=2] at ($(x9)+(.5,.65)$) {$\Bigg\}$};
\node at ($(x9)+(1.1,.65)$) {$n-N$};
\end{scriptsize}
\end{tikzpicture}
\end{center}
\caption{{The tree $T$, considered in Example \ref{ex:tree}, on $n$ vertives with automorphism $\phi=(N+1, N+2, \ldots, n)$ is shown. If $n$ is even and $N$ is odd then the adjacency matrix of $A=A(T)$ has $N+1$ simple eigenvalues. }}\label{fig:tree}
\end{figure}

\begin{example}\label{ex:tree}

Let $n$ be a positive even integer and let $N < n-2$ be odd.  Let $T$ be the tree in Figure \ref{fig:tree} with automorphism  $\phi = (N+1, N+2, \ldots, n)$.  Then $k = n-N$ and the bound in 1 is
 \[
 N +\frac{n-N}{n-N} = N+1.
 \]
 Let $M$ be the adjacency matrix of $T$.  Then
 \[
 M = \left[\begin{array}{rr}
 A(P_N) & B \\ B^T & 0\end{array}\right[
 \]
 where $B$ is the $N \times n-N$ matrix with every entry in its last row equal to 1 and all other entries equal to 0. The leading $N+1 \times N+1$ principal submatrix of $M$ is $A(P_{N+1})$, which is a tridiagonal matrix with 1's on the subdiagonal and superdiagonal and all other entries 0.  Since $\det A(P_{N+1})$ is $\pm1$ for all odd $N$, $\rank M \geq N+1$.  But the last $k$ rows and columns of $M$ are the same, so $\rank M \leq N+1$.  Thus we have $\rank M = N+1$.

 Suppose $\lambda$ is a nonzero eigenvalue of $M$.  Let $R$ be the matrix obtained from $M - \lambda I$ by deleting the $N$th row and first column.  Then $R$ is a lower triangular matrix with all diagonal entries nonzero.  (There are $N-1$ entries equal to 1 followed by $n-N$ entries equal to $-\lambda$.)  Thus, $R$ is invertible and we have
 \[
 n-1 \geq \rank (M-\lambda I) \geq \rank R = n-1.
 \]
 Then $\rank (M - \lambda I) = n-1$, and $\lambda$ is a simple eigenvalue of $M$.  Since the number of nonzero eigenvalues of $M$ is $\rank M$, we have $N+1$ simple eigenvalues of $M$, which matches the bound in 1, proving that it is sharp.
\end{example}


\section{Some Applications}\label{sec:App}
Suppose $M$ is a matrix associated with a graph $G$. The eigenvalues of $M$ are \emph{global} characteristics of the matrix $M$ in the sense that they depend, in general, on all entries of the matrix or equivalently on the entire structure of the graph $G$. In contrast, the symmetries of a graph $G$ are inherently \emph{local} when they correspond to basic automorphisms.

A particularly important example is in the case of finding the eigenvalues associated with the graph structure of a real network. Reasons for this include the fact that most real networks are large, often having either thousands, hundreds of thousands, or more vertices \cite{MacArthur}. Second, real networks are on average much more structured and in particular have more symmetries than random graphs (see \cite{MacArthur}). Third, there is often only partial or local information regarding the structure of many of these networks because of the complications in obtaining network data.

The implication, with respect to equitable decompositions, is that by finding a number of graph symmetries it is possible to gain information regarding the graph's set of eigenvalues. This information, although incomplete, can be used to determine spectral properties of the network as is demonstrated in the following example.

\begin{figure}
\begin{center}

\begin{overpic}[scale=.44]{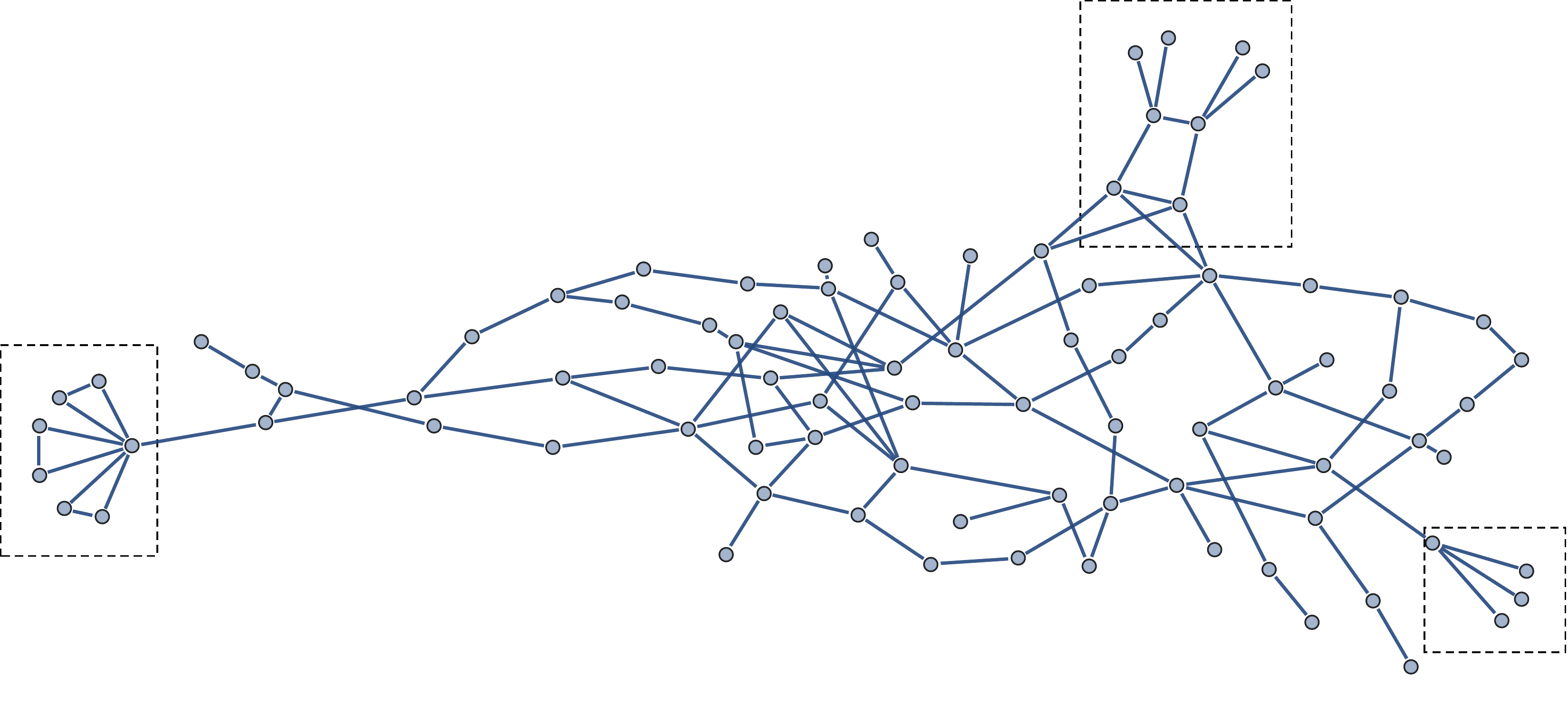}
    \put(71.5,42.25){\tiny$5$}
    \put(74.5,43.25){\tiny$7$}
    \put(79.25,42.5){\tiny$6$}
    \put(81.5,40.5){\tiny$8$}
    \put(71.75,37.5){\tiny$3$}
    \put(77.5,36.5){\tiny$4$}
    \put(69.5,33){\tiny$1$}
    \put(76.25,31.5){\tiny$2$}

    \put(6.25,10.25){\tiny$9$}
    \put(2.75,10.9){\tiny$10$}
    \put(0.4,14){\tiny$11$}
    \put(0.4,17.5){\tiny$12$}
    \put(1.75,20){\tiny$13$}
    \put(5.75,21.5){\tiny$14$}

    \put(96.5,4){\tiny$15$}
    \put(97.75,5.5){\tiny$16$}
    \put(98.25,7.75){\tiny$17$}

    \end{overpic}\end{center}
    
    \vspace{-0.25in}
    
\caption{The graph structure of a network on eighty-six vertices is shown. Three distinct graph symmetries are highlighted using boxes, which correspond to the automorphisms $\phi_1=(12)(34)(56)(78)$, $\phi_2=(9,11,13)(10,12,14)$, and $\phi_3=(15,16,17)$, respectively.}\label{fig:network}
\end{figure}

\begin{example}\label{ex:network}
Consider the undirected graph on eighty-six vertices in Figure \ref{fig:network}, which we think of as representing the graph structure of some network. Highlighted are three symmetries of the graph that correspond to the basic automorphisms
\[
\phi_1=(12)(34)(56)(78), \ \ \phi_2=(9,11,13)(10,12,14), \ \ \text{and} \ \ \phi_3=(15,16,17)
\]
respectively. Here, those vertices that are fixed by a given automorphism are omitted in this representation. We refer to these as the \emph{core} vertices of the network with respect to $\phi_i$ for $i=1,2,3$.

The idea is that by knowing a (local) graph symmetry, equivalently a (basic) automorphism, we can recover without much computational effort a number of the eigenvalues associated with the graph. Specifically, we can recover those eigenvalues associated with those vertices that are not fixed by the automorphism.

For instance, suppose we consider the $86\times 86$ adjacency matrix $A$ of the graph $G$. Using Theorem \ref{thm:2} we can decompose $A$ with respect to $\phi_1$. This gives the $82 \times 82$ matrix $B_{0,1}$ and the $4 \times 4$ matrix  $B_{1,1}$, where the second subscript indicates that we are decomposing the matrix $A$ with respect to $\phi_1$. Importantly, the matrix $B_{0,1}$ represents the core of the network fixed by $\phi_1$ together with half of those permuted by $\phi_1$, while $B_{1,1}$ represents the other half of the vertices permuted by $\phi_1$.

It is trivial to find the eigenvalues of $B_{1,1}$ as opposed to computing all eigenvalues of $A$. In fact, using Theorem \ref{thm:2} we quickly compute
\[
B_{1,1}=
\left[\begin{array}{cccc}
-1&1&0&0\\
1&-1&1&1\\
0&1&0&0\\
0&1&0&0
\end{array}\right],
\]
from which we find that $\sigma(B_{1,1})\approx \{1.170, 0, -.689, -2.481\}\subset\sigma(A)$.

Similarly, using $\phi_2$ we find that $A$ is similar to $B_{0,2}\oplus B_{{1,2}}\oplus B_{2,2}$ where $B_{0,2}\in\mathbb{N}^{82\times 82}$ and
\[
B_{1,2}=B_{2,2}=\left[\begin{array}{cc}
1&0\\
0&1
\end{array}\right],
\]
where we can quickly compute that $\sigma(B_{1,2})=\sigma(B_{2,2})=\{-1,1\}\subset \sigma(A)$. The same can be done for $\phi_3$ where we find that $A$ is similar to $B_{0,3}\oplus B_{1,3}\oplus B_{2,3}$ where $B_{1,3}=B_{2,3}=[0]$. Thus, $\sigma(B_{1,3}\oplus B_{2,3})=\{0,0\}\subset\sigma(A)$. Hence, knowing $\phi_i$ for $i=1,2,3$ we can quickly compute that
\[
\{-2.481, -.689, -1,-1,0,0,0,1,1,1.170\}\subset\sigma(A).
\]

Using the same procedure on the Laplacian matrix $L$ of $G$ we similarly find that
\[
\{0.523,1,1,1,1,1,3,3,3.552,5.925\}\subset\sigma(L).
\]
It is worth emphasizing that to compute these eigenvalues of $A$ and $L$, it does not matter how large the set of core vertices of a network are, only the local symmetries are needed to find them. It is also worth noting that, the much larger matrix $B_{0,i}$ is the divisor matrix of the equitable partition associated with $\phi_i$ for $i=1,2,3$. That is, the theory of equitable partitions does not provide a way of finding the eigenvalues of these smaller matrices that correspond to the symmetries of a graph, only those associated with its core.
\end{example}

An importance consequence of knowing some subset of a network's or graph's set of symmetries is that it gives us a way of quickly estimating properties related to dynamics processes of the associated network. For the sake of illustration, consider the network of $N$ coupled oscillators whose dynamics is governed by
\begin{equation}\label{eq:Kuramoto}
\dot{x}_i=\omega_i+\sum_{j\neq i}K_{ij}\sin(x_j-x_i), \ \ \text{for} \ \ i=1,\dots,N.
\end{equation}
Here, $x_i$ denotes the phase of the $i$th oscillator, $\omega_i$ its intrinsic frequency, and $K\in\mathbb{R}^{N\times N}$ is the weighted matrix that describe the network's interactions. This set of equations represents the Kuramoto-model, one of the most studied models for synchronization in network science \cite{Aceborn}.

The Kuromoto-model is known to  exhibit phase locking, where two oscillators $i$ and $j$ are \emph{locked} if $\dot{x}_i-\dot{x}_j=0$ for all $i,j=1,\dots,N$. The local dynamics of these phase-locked states depend on the eigenvalues of the \emph{linearization matrix} $J\in\mathbb{R}^{N\times N}$ given by $J_{ij}=\partial x_i/\partial x_j$. Importantly, these phase locked states are unstable if any eigenvalue of the matrix $J$ has a positive real part.

As can be seen from Equation \eqref{eq:Kuramoto}, if there are symmetries in the structure of interactions given by the coupling matrix $K$, then there are symmetries in the linearization matrix $J$. If these symmetries correspond to some basic automorphism $\phi$ then its possible to quickly determine some number of eigenvalues of $J$ irrespective of how large $N$ is. That is, if $J$ can be decomposed into the matrices $B_0,\dots,B_{k-1}$ where if any $B_i$ has an eigenvalue with a positive real part, the corresponding phased lock state cannot be stable.

This strategy not only works for the Kuramoto model but for any linearization of a continuous-time or discrete-time dynamical system about an equilibria. The method in the continuous-time case is analogous to what is described for the Kuramoto model, with slight modifications. For a discrete-time dynamical system an equilibria is unstable if the corresponding linearization has an eigenvalue whose magnitude is greater than 1 (i.e. its spectral radius is greater than 1). Hence, an equilibria is unstable if and only if when its linearization is decomposed into the matrices $B_0,\dots,B_{k-1}$, one of these matrices has a spectral radius greater than 1.

Not only can this method of equitable decompositions be used to estimate the spectral radius of a matrix but also its spectral gap. {The \emph{spectral gap} of a matrix is typically defined to be the difference between the moduli of the matrix' two largest eigenvalues. For a Laplacian matrix the matrix' largest nonzero eigenvalue is its spectral gap, which is the matrix' \emph{algebraic connectivity} or \emph{Fiedler value} if its associated graph is connected \cite{Kirkland}. The spectral gap of a Laplacian matrix} determines a number of dynamic properties on certain networks including synchronization thresholds and the rate of convergence to synchronization and consensus \cite{Pikovsky,Almendral,Atay}. For the stochastic matrix $P$, the spectral gap is related to mixing times of the associated Markov chain, and to first-passage times of random {walks} on the network \cite{Donetti}.


Using the automorphisms $\phi_1$, $\phi_2$, and $\phi_3$ to decompose the Laplacian matrix $L$ of the network shown in Figure \ref{fig:network} allows us to show that the spectral gap of this matrix, {which is also its algebraic connectivity in this case}, is less than or equal to $\lambda_2=0.523$ (see Example \ref{ex:network}). Again the point is that to get this bound we only need to know a few local symmetries of the graph.


\section{Conclusion}\label{sec:conclusion}

The theory introduced in this paper, which extends the theory of equitable partitions, allows one to decompose a matrix associated with a graph over a specific class of automorphisms. The result is a smaller number of matrices whose collective eigenvalues are the same as those associated with the original larger matrix. Because of this, an equitable decomposition can be viewed as a decomposition of a matrix over a particular set of symmetries that preserve the matrix' eigenvalues.

The particular types of automorphisms we consider in this paper are what we refer to as either \emph{uniform} automorphisms or \emph{basic} automorphisms. Uniform automorphisms are global in the sense that every graph vertex is permuted by a uniform automorphism. Basic automorphisms are typically local in that some number of the graph's vertices is fixed by this type of automorphism. In a following paper we describe how the results we present here can be used to equitably decompose a graph over \emph{any} of its automorphisms not only those that are either uniform or basic. The key idea is that for any automorphism $\phi$, some power of $\phi$ is a basic automorphism.

Importantly, one consequence of knowing a basic automorphism of a graph is that one can use this local information regarding the graph's structure to directly determine a number of eigenvalues associated with the graph. This method of using local symmetries to find eigenvalues of a graph is potentially significant in analyzing the spectral properties of real-world networks. One reason for this is that the size of these real-world networks can make it computationally expensive to determine their entire spectrum. Another is that real-world networks typically have a high degree of symmetry when compared, for instance, with randomly generated graphs \cite{MacArthur}. With this in mind, the method of equitable decompositions could be a practical tool for bounding the spectrum associated with a network, which can be used to determine certain properties related to the network's dynamics.

We note that the only restriction on equitably decomposing a graph is that the matrix associated with the graph be automorphism compatible. That is, the matrix needs to respect the structure of the graph's automorphisms (see Definition \ref{def:autocomp}). Automorphism compatible matrices include the graph's adjacency matrix, various Laplacian matrices, distance matrix, etc. This class includes a weighted graph's  adjacency matrix if the graph's weights also respect the structure of the graph's automorphisms. Hence, a large number of matrices that are typically associated with a graph can be decomposed over any of the graph's uniform or basic automorphisms.

Not only does this method of decomposition allow one to break up matrices along symmetries of an associated graph, it also can be used to bound the number of simple eigenvalues of certain graphs. Our main result in this direction is that, if an undirected graph has either a uniform or basic automorphism $\phi$, then the maximal number of simple eigenvalues of the graph can be bounded in terms of the orbit structure of $\phi$ (see Corollary \ref{cor:simpleevals} and \ref{cor:basicevals}). These results extend those of Petersdorf-Sachs \cite{PetersdorfSachs} to automorphism compatible matrices.

Regarding equitable decompositions, a question that one can ask is to what extent the process of equitably decomposing a graph can be reversed, i.e. to what extent can symmetries be introduced into a graph by \emph{equitably composing} a number of smaller graphs? Of course these graph would have to have the \emph{correct} form to be composed together, so there may be some natural restrictions on the type of symmetries that can be inserted into a graph in this way. From the spectral point of view, these restrictions may also impose conditions on the type of eigenvalues that can be added to a graph via such compositions.

An additional question is to what extent equitable decompositions can be extended to graphs without symmetries. That is, as not all equitable partitions correspond to automorphisms of a graph, is it possible to equitably decompose a matrix with respect to such partitions? One can even ask the more general question which is, around what type of structures is it possible to decompose a matrix associated with a graph. Understanding what kinds of structures, specifically local structures, around which it is possible to decompose a matrix, graph, or network would likewise allow one to estimate the spectrum of the respective matrix, graph, or network without having to compute all of its eigenvalues.

\bibliography{references}{}
\bibliographystyle{alpha} 

\end{document}